\newtheorem*{acknowledgements*}{Acknowledgements}
\newtheorem{theorem}{Theorem}[section]
\newtheorem{lemma}[theorem]{Lemma}
\newtheorem*{definition}{Definition}
\newtheorem*{theorem*}{Theorem}
\theoremstyle{remark}
\newtheorem{remark}{Remark}
\newtheorem*{remark*}{Remark}
\numberwithin{equation}{section}
\renewcommand{\phi}{\varphi}
\newcommand{\R}{\mathbb{R}}
\newcommand{\E}{\mathbb{E}}
\newcommand{\Z}{\mathbb{Z}}
\newcommand{\N}{\mathbb{N}}
\newcommand{\Var}{\text{Var}}
\newcommand{\boldb}{\boldsymbol{b}}
\newcommand{\boldc}{\boldsymbol{c}}
\newcommand{\boldk}{\boldsymbol{k}}
\newcommand{\boldn}{\boldsymbol{n}}
\newcommand{\boldv}{\boldsymbol{v}}
\newcommand{\boldq}{\boldsymbol{q}}
\newcommand{\boldz}{\boldsymbol{z}}
\newcommand{\boldx}{\boldsymbol{x}}
\newcommand{\boldzn}{\boldsymbol{z}_n}
\newcommand{\bolda}{\boldsymbol{a}}
\newcommand{\boldA}{\boldsymbol{A}}
\newcommand{\boldxn}{\boldsymbol{x}_n}
\newcommand{\boldCN}{\boldsymbol{C}_N}
\newcommand{\boldC}{\boldsymbol{C}}
\newcommand{\boldalpha}{\boldsymbol{\alpha}}
\begin{document}
	
	\title[Minimal Gap for Higher Dimensional Sequences]{Minimal Gap for Higher Dimensional Sequences}
	
	\author{Tanmoy Bera}
	\address{The Institute of Mathematical Sciences, A CI of 
		Homi Bhabha National Institute, CIT Campus, Taramani, Chennai
		600113, India.}
	\email{tanmoyb@imsc.res.in}

	\subjclass[2020]{11J13, 11J71, 11J83, 11K31}
	\keywords{Minimal gap, metric diophantine approximation}
	\maketitle
	\begin{abstract}
		In this note, we extend the notion of minimal gaps to the higher dimensional sequences. We bound the minimal gap for $(\{\bolda_n\boldalpha\}),$ $(\{a_n\boldalpha\})$ and $(\{\bolda_n\cdot\boldalpha\})$ in terms of cardinality of the difference set of $a_n$ and $\bolda_n,$ where $a_n$ and $a_n^{(1)},\dots,a_n^{(d)}$ are sequences of distinct integers. 
	\end{abstract}
	
	\section{Introduction}
	Let $(x_n)$ be a sequence in $[0,1)$ and $N$ be a natural number. We define the minimal gap for $(x_n)$ as
	\[\delta_{\text{min}}((x_n),N):=\min\{\|x_n-x_m\|:1\leq n\neq m\leq N\},\]
	where $\displaystyle\|x\|=\min_{k\in\Z}|x+k|$ denotes the nearest integer distance of $x.$
	Let $x_1,\dots,x_N$ be $N$ independent and uniformly distributed points in the unit interval, then the size of the minimal gap is of asymptotic order $1/N^2$ almost surely as $N\to \infty$.
	
	Let $(a_n)$ be a sequence of distinct integers. In~\cite{Rudnick2018minimalgap}, Rudnick investigated the minimal gap statistic for $(\{a_n\alpha\}),$ where $\{\cdot\}$ denotes the fractional part. The additive energy of $A_N=\{a_1,\dots,a_N\}$ is defined by
	\[E(A_N)=\#\{(n_1,n_2,n_3,n_4)\in[1,N]^4\cap\N^4: a_{n_1}+a_{n_2}=a_{n_3}+a_{n_4}\}.\] 
	He showed that the minimal gap $\delta_{\text{min}}(\alpha,N)$ for $\{a_n\alpha\}$ is of similar asymptotic order as that of random sequences if the additive energy of $(a_n)$ is small. In particular, he proved that 
	for all $\eta>0,$ for almost all $\alpha,$ $$\delta_{\text{min}}(\alpha,N)>\frac{1}{N^{2+\eta}}\text{ for sufficiently large (s.l.) } N.$$
	And, if
	$E(A_N)\ll N^{2+o(1)},$ then for all $\eta>0$
	$$\delta_{\text{min}}(\alpha,N)<\frac{1}{N^{2-\eta}}\text{ for s.l. }N.$$
	
	Recently, Aistleitner, El-Baz and Munsch~\cite{Aistleitner2021minimalgap} improved these bounds. Instead of considering differences $(a_n-a_m)$ for all pairs $(m,n)$ which leads to the additive energy (in Rudnick's method), they considered only distinct differences, which improved the bounds. Suppose $(A_N-A_N)^+=\{c>0: c=a_n-a_m \text{ for some }a_n,a_m\in A_N\}.$ It is obvious that 
	\[\delta_{\text{min}}(\alpha,N)=\min\{\|z\alpha\|: z\in(A_N-A_N)^+\}.\]
	They proved the following results. Let $\epsilon>0.$ Then for almost all $\alpha\in[0,1]$
	\begin{align*}
		\delta_{\text{min}}(\alpha,N)\leq&\frac{\log_2 a_N}{C_N\log N\log_2 N}\text{ for infinitely many (i.m.) }N,
	\end{align*}
	
	and
	\[\delta_{\text{min}}(\alpha,N)\geq\frac{1}{C_N\log N(\log_2 N)^{1+\epsilon}}\text{ for s.l. }N,\]
	where $C_N=\#(A_N-A_N)^{+}.$
	(Also, the upper bound holds without the sequence-dependent term $\frac{\log_2 a_N}{\log N \log_2 N}.$) The upper bound result for s.l. $N$ is the following
	\[\delta_{\text{min}}(\alpha,N)\leq\frac{N^\epsilon}{C_N}\text{ for s.l. }N.\] 
	These bounds can be compared with the previous bounds using 
	\[N^4/C_N\leq E(A_N)\leq N^3.\]
	In recent work, Reghvim~\cite{shvoreghvim2021} studied minimal gaps for various sequences of the form $\{a_n\alpha\},$  where $a_n$ is a sequence of positive reals with some growth conditions. 
	
	In this note, we extend the notion of minimal gap to higher dimensional sequences and study upper and lower bounds for some sequences. Let $d\geq 2$ and $\|\cdot\|$ be a norm in $\R^d.$ For $\boldx\in\R^d$ we define the nearest integer distance as  $$\|\boldx\|^{(\text{nid})}=\displaystyle\min_{\boldsymbol{k}\in\Z^d}\|\boldx+\boldsymbol{k}\|.$$ 
	\begin{definition}
		Let $(\boldxn)\subseteq[0,1)^d$ be a sequence. Minimal gap for $(\boldxn)$ with respect to the norm $\|\cdot\|$ is defined by
		$$\delta_{\text{min}}^{\|\cdot\|}((\boldxn,N)=\min\{\|\boldxn-\boldx_m\|^{(\text{nid})}: 1\leq m\neq n\leq N\}$$
	\end{definition}
	
	Since all norms are equivalent in $\R^d$, it is enough to study the bounds of the minimal gap with respect to one norm only. Here we restrict ourselves the $\infty-$norm and denote the minimal gap by $\delta_{\text{min}}^\infty.$
	
	Let $(a_n^{(1)}),\ldots,(a_n^{(d)})$ be strictly increasing sequences of natural numbers and the vector $\bolda_n=(a_n^{(1)},\dots,a_n^{(d)})$. 
	The purpose of this note is to study the bounds of minimal gaps for higher dimensional sequences $(\{\bolda_n\boldalpha\})=(\{a_n^{(1)}\alpha_1\},\dots,\{a_n^{(d)}\alpha_d\})$ and $(\{a_n\boldalpha\})=\left(\left\{a_n\alpha_1\right\},\dots,\left\{a_n\alpha_d\right\}\right),$ and the linear form $(\{\bolda_n\cdot\boldalpha\})=(\{a_n^{(1)}\alpha_1+\cdots+a_n^{(d)}\alpha_d\}),$ which is a one-dimensional sequence. For the first two sequences we denote the minimal gaps by $\delta_{\text{min}}^{\infty}(\boldalpha,N)$ and for linear forms by $\delta_{\text{min}}(\boldalpha,N).$

	We define
	\begin{align*}
		&\boldA_N=\{(a_n^{(1)},\dots,a_n^{(d)}): n\leq N\},\text{ and } \\
		&(\boldA_N-\boldA_N)^+=\{\boldc\in \boldA_N-\boldA_N: \text{ all } c_i>0\}.
	\end{align*}
	Let $A=\bigcup(A_N-A_N)^+$ and $\boldA=\bigcup(\boldA_N-\boldA_N)^+.$ We write the elements of $A$ and $\boldA$ as sequences $\{z_n\}$ and $\{\boldzn\}$ of distinct natural numbers and vectors respectively satisfying the following conditions
	\begin{align}
		&\{z_1,\dots,z_{C_N}\}=(A_N-A_N)^+,\label{one dim zn}\\
		&\{\boldz_1,\dots,\boldz_{\boldCN}\} =(\boldA_N-\boldA_N)^+,\label{high dim zn}
	\end{align}
	where $C_N=\#(A_N-A_N)^+$ and $\boldCN=\#(\boldA_N-\boldA_N)^+.$
	Note that both of them satisfy $N\leq C_N,\boldCN\leq N^2.$
	Now we state our results.
	\begin{theorem}\label{theorem1}
		Let $d\geq 2$ and $\epsilon>0$. Then for almost all $\boldalpha\in[0,1]^d$ we have,
		\begin{align*} 
			\delta_{\text{min}}^\infty(\boldalpha,N)\geq \frac{1}{\boldCN^{1/d}(\log N)^{1/d}(\log_2 N)^{1/d+\epsilon}}\: \text{  for s.l. }N,
		\end{align*}
		and the upper bound of the minimal gap for $\{a_n\boldalpha\},$ also for almost all $\boldalpha\in[0,1]^d,$
		\begin{align*} 
			\delta_{\text{min}}^\infty(\boldalpha,N)\leq \frac{1}{C_N^{1/d}(\log N)^{1/d}(\log_2 N)^{1/d}}\: \text{ for i.m. }N.
		\end{align*}
	\end{theorem}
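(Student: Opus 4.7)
I would adapt the $1$-dimensional Borel--Cantelli strategy of~\cite{Aistleitner2021minimalgap} to $d$ dimensions, exploiting the independence of the coordinates $\alpha_1,\dots,\alpha_d$ of $\boldalpha$ under Lebesgue measure on $[0,1]^d$. The starting point is the identity
\[
\delta_{\min}^\infty(\boldalpha,N)=\min_{\boldz\in(\boldA_N-\boldA_N)^+}\max_{1\le i\le d}\|z^{(i)}\alpha_i\|
\]
(with $\boldz$ replaced by a scalar $z\in(A_N-A_N)^+$ for the sequence $(\{a_n\boldalpha\})$), together with the observation that for any such fixed $\boldz$ with nonzero entries and any $\delta>0$, independence of the $\alpha_i$ gives $P(\max_i\|z^{(i)}\alpha_i\|<\delta)=(2\delta)^d$ and hence the basic tail estimate $P(\delta_{\min}^\infty(\boldalpha,N)<\delta)\le\boldCN(2\delta)^d$.

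For the lower bound I would dyadically discretise the sequence $\boldCN$ by letting $N_k$ be the smallest $N$ with $\boldCN\ge 2^k$, so that on each block $[N_k,N_{k+1}-1]$ the value of $\boldCN$ lies in $[2^k,2^{k+1})$. Applying the tail estimate at the right endpoint $M_k:=N_{k+1}-1$ with threshold
\[
\delta_k=\bigl(\boldsymbol{C}_{M_k}\cdot k\cdot(\log k)^{1+d\epsilon/2}\bigr)^{-1/d}
\]
produces probabilities $\le 2^{d+1}/(k(\log k)^{1+d\epsilon/2})$, summable in $k$. Borel--Cantelli then gives $\delta_{\min}^\infty(\boldalpha,M_k)\ge\delta_k$ for a.e.\ $\boldalpha$ and all large $k$; monotonicity of $\delta_{\min}^\infty(\boldalpha,\cdot)$ in $N$ transfers this to every $N\in[N_k,M_k]$, and the elementary comparisons $\boldsymbol{C}_{M_k}\le 2\boldCN$, $k\ll\log N$ and $\log k\le\log_2 N+O(1)$ convert the estimate into the form claimed in the theorem, with the $\epsilon/2\to\epsilon$ slack absorbing constants.

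For the upper bound I would set $\delta_N^d=1/(C_N\log N\log_2 N)$ and consider the counting function $f_N(\boldalpha)=\#\{z\in(A_N-A_N)^+:\max_i\|z\alpha_i\|<\delta_N\}$, for which $E[f_N]=C_N(2\delta_N)^d=2^d/(\log N\log_2 N)$, so that $\sum_kE[f_{N_k}]$ diverges along $N_k=2^k$. By the Chung--Erd\H{o}s (second-moment) form of Borel--Cantelli it suffices to control the joint second moment
\[
E[f_{N_k}f_{N_l}]=\sum_{z,z'}\prod_{i=1}^d P\bigl(\|z\alpha_i\|<\delta_{N_k},\,\|z'\alpha_i\|<\delta_{N_l}\bigr),
\]
where each one-variable probability is estimated by the standard small-denominator/$\gcd$ bound used in~\cite{Aistleitner2021minimalgap}, and the product over $i$ simply raises the estimate to the $d$-th power before summing over pairs $(z,z')$.

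\textbf{Main obstacle.} The lower bound is essentially mechanical once the $\boldCN$-adapted subsequence is chosen. The delicate step is the upper bound: the two-variable estimate for $P(\|z\alpha\|<\delta,\|z'\alpha\|<\delta)$ has to be sharp enough that, after taking the $d$-th power and double-summing over $(A_N-A_N)^+$, the resulting $E[f_{N_k}f_{N_l}]$ is dominated by $E[f_{N_k}]E[f_{N_l}]$ up to only the polylogarithmic slack already built into $\delta_N$. It is this step that forces the precise logarithmic factors appearing in both parts of the theorem.
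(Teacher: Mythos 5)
Your lower bound is correct and is essentially the paper's argument in a different dress: the paper enumerates the difference set as a single sequence $(\boldz_n)$, applies the convergent Borel--Cantelli lemma with $\psi(n)=n^{-1/d}(\log\sqrt{n})^{-1/d}(\log_2\sqrt{n})^{-1/d-\epsilon/d}$, and uses monotonicity of $\psi$ together with $N\le\boldCN\le N^2$; your dyadic blocking in $\boldCN$ plus the union bound $P(\delta_{\min}^\infty<\delta)\le\boldCN(2\delta)^d$ accomplishes the same thing with the same summability computation.

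The upper bound is where your proposal has a genuine gap. The paper does not run a second-moment argument at all: it invokes Gallagher's higher-dimensional divergence theorem (Lemma~\ref{gallaghar lemma}, from~\cite{Gllaghar}), which for $d\ge 2$ is a zero--one law requiring \emph{no} monotonicity and no coprimality hypotheses, so the entire upper bound reduces to checking that $\sum_n\psi(n)^d=\infty$ for $\psi(n)=(n\log n\log_2 n)^{-1/d}$ (applied along the subsequence $z_n$ by setting $\psi=0$ off it). Your plan replaces this citation with a hand-made Chung--Erd\H{o}s argument, but the step you defer as ``the main obstacle'' --- showing $E[f_{N_k}f_{N_l}]$ is dominated by $E[f_{N_k}]E[f_{N_l}]$ after raising the one-variable overlap bound to the $d$-th power --- is precisely the content of Gallagher's theorem, and it is not routine: expanding $\prod_{i\le d}\lambda\bigl(\{\|z\alpha_i\|<\delta\}\cap\{\|z'\alpha_i\|<\delta'\}\bigr)\le\bigl(4\delta\delta'+2(z,z')\sqrt{\delta\delta'/(zz')}\bigr)^d$ produces $d$ distinct G\'al-type GCD sums $\sum_{z,z'}(z,z')^j(zz')^{-j/2}$, $1\le j\le d$, each needing a separate estimate, and it is exactly here that the restriction $d\ge2$ enters (in $d=1$ this approach fails without Duffin--Schaeffer-type input, which is why~\cite{Aistleitner2021minimalgap} is so much more delicate). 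In addition, Chung--Erd\H{o}s only yields $\lambda(\limsup_k\{f_{N_k}\ge1\})>0$; you still need a zero--one law (as the paper uses in Theorem~\ref{thm3} via~\cite{victor-samju2008}, or Gallagher's own zero--one law) to upgrade this to almost every $\boldalpha$, and your proposal omits this step entirely. So the plan is plausible in outline, but as written the hard half of the theorem is asserted rather than proved, and the clean route --- citing Gallagher --- is missed.
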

	In the above results, we obtain upper bounds for infinitely many $N.$ Now we state the results of upper bounds for sufficiently large $N.$
	Before starting the next result, for $D'\subset D:=\{1,\dots, d\},$ we define the joint additive energy of $\boldA_N^{D'}=\{\bolda_n^{D'}=(a_n^{(i)})_{i\in D'}: n\leq N\}$ by
	\[E(\boldA_N^{D'}):=\#\{\boldn\in[1,N]^4\cap\N^4:\bolda_{n_1}^{D'}+\bolda_{n_2}^{D'}=\bolda_{n_3}^{D'}+\bolda_{n_4}^{D'}\},\]
	and for $\boldv\in(\Z\setminus 0)^{D'},$ the representation function 
	\begin{align}\label{rep fun}
		\mathcal{R}_N^{D'}(\boldv)=\#\{1\leq n\neq m\leq N: \bolda_n^{D'}-\bolda_m^{D'}=\boldv\}.
	\end{align}
	Here, by $\boldv\in(\Z\setminus 0)^{D'}$ we mean $\boldv=(v_i)_{i\in D'}$ with $v_i$ non-zero. Note that $L^2$ norm of $\mathcal{R}_N^{D'}$ satisfies the bound, $\|\mathcal{R}_N^{D'}\|_2^2\leq E(\boldA_N^{D'})\leq N^3.$ 
	\begin{theorem}\label{theorem2}
		Let $d\geq 2$ and $\delta>0$. Then,
		for almost all $\boldalpha\in[0,1]^d$ the minimal gap for $(\{a_n\boldalpha\})$ satisfies
		\begin{align*} 
			\delta_{\text{min}}^\infty(\boldalpha,N)\leq \frac{(\log N)^{1+1/d+\delta}}{C_N^{1/d}}\: \text{ for s.l. }N.
		\end{align*}
		If for all $D'\subset D$ with $|D'|>d/2$ the joint additive energy satisfies $E(\boldA_N^{D'})\ll N^{2+o(1)}$ then, for almost all $\boldalpha\in[0,1]^d$ the minimal gap for $(\{\bolda_n\boldalpha\})$ satisfies
		\begin{align*} 
			\delta_{\text{min}}^\infty(\boldalpha,N)\leq \frac{N^\delta}{N^{2/d}}, \text{  for s.l. }N.
		\end{align*}
	\end{theorem}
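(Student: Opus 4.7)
The plan for both parts is a second-moment argument in the spirit of Aistleitner--El-Baz--Munsch, lifted to higher dimensions via the coordinate independence of $\boldalpha$ under Lebesgue measure on $[0,1]^d$, combined with a first Borel--Cantelli step along a suitable subsequence.

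For Part 1, I would set $r_N := (\log N)^{1+1/d+\delta}/C_N^{1/d}$ and introduce
\[
T_N(\boldalpha) := \sum_{z\in(A_N-A_N)^+}\prod_{i=1}^d \chi_{r_N}(z\alpha_i),
\]
where $\chi_r(t)=\mathds{1}[\|t\|\leq r]$ is the $1$-periodic indicator. Since $\{T_N>0\}\subseteq\{\delta_{\text{min}}^\infty(\boldalpha,N)\leq 2 r_N\}$, it suffices to show $T_N>0$ for sufficiently large $N$, a.s. A direct computation gives $\mathbb{E}[T_N]=C_N(2 r_N)^d=2^d(\log N)^{d+1+d\delta}$, which diverges. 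Coordinate independence yields
\[
\mathbb{E}[T_N^2]=\sum_{z_1,z_2\in(A_N-A_N)^+}\mu(z_1,z_2)^d, \qquad \mu(z_1,z_2):=\int_0^1 \chi_{r_N}(z_1\alpha)\chi_{r_N}(z_2\alpha)\,d\alpha.
\]
I would estimate $\mu$ via Parseval: writing $\mu(z_1,z_2)-(2r_N)^2=2\sum_{\ell\geq 1}\widehat\chi_{r_N}(\ell z_1')\widehat\chi_{r_N}(\ell z_2')$ with $z_j=\gcd(z_1,z_2) z_j'$, and splitting this into three ranges of $\ell$ gives $|\mu(z_1,z_2)-(2r_N)^2|\ll r_N \log a_N / \max(z_1',z_2')$. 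A careful bookkeeping of aligned versus generic pairs then leads to $\mathrm{Var}(T_N)\ll \mathbb{E}[T_N]$, and Chebyshev yields $\mathbb{P}(T_N=0)\ll (\log N)^{-(d+1+d\delta)}$. To secure summability, I would pass to the adaptive sequence $(N_k)$ defined by letting $N_{k+1}$ be the first index past $N_k$ at which either $N$ or $C_N$ doubles; the bound $C_N\leq N^2/2$ forces $N_k\geq 2^{(k+1)/2}$, so $\log N_k\asymp k$ and $\sum_k k^{-(d+1+d\delta)}<\infty$. Borel--Cantelli gives $T_{N_k}>0$ for all large $k$. For $N\in[N_k,N_{k+1})$ any witness $z\in(A_{N_k}-A_{N_k})^+\subset(A_N-A_N)^+$ still satisfies $\max_i\|z\alpha_i\|\leq r_{N_k}$, and by construction $C_N/C_{N_k}\leq 2$ gives $r_{N_k}/r_N\leq 2^{1/d}$; the $O(1)$ loss is absorbed by running the argument with a slightly smaller parameter $\delta'<\delta$.

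For Part 2, I would set $r_N:=N^\delta/N^{2/d}$ and
\[
T_N(\boldalpha):=\sum_{1\leq n\neq m\leq N}\prod_{i=1}^d \chi_{r_N}\!\bigl((a_n^{(i)}-a_m^{(i)})\alpha_i\bigr),
\]
so $\mathbb{E}[T_N]=N(N-1)(2 r_N)^d\asymp N^{d\delta}$. Expanding $\mathbb{E}[T_N^2]$ as a sum over $4$-tuples $(n_1,m_1,n_2,m_2)$ and factoring across the $d$ coordinates, each $i$ contributes $\mu(b_i,c_i)$ with $b_i=a_{n_1}^{(i)}-a_{m_1}^{(i)}$ and $c_i=a_{n_2}^{(i)}-a_{m_2}^{(i)}$; this factor equals $2r_N$ when $|b_i|=|c_i|$ and $(2r_N)^2+o((2r_N)^2)$ otherwise. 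Stratifying by $S:=\{i:|b_i|=|c_i|\}\subseteq D$, the number of matching $4$-tuples is bounded by $2\,E(\boldA_N^S)$: for $|S|\leq d/2$ use the trivial $E(\boldA_N^S)\leq N^3$, and for $|S|>d/2$ use the hypothesis $E(\boldA_N^S)\ll N^{2+o(1)}$. A short exponent check comparing to $\mathbb{E}[T_N]^2\asymp N^4(2 r_N)^{2d}$ shows that every non-empty $S$ contributes at most $\mathbb{E}[T_N]^2\cdot N^{-\eta(S)}$ with $\eta(S)>0$ bounded below uniformly by some $\eta=\eta(d,\delta)>0$. Chebyshev gives $\mathbb{P}(T_N=0)\ll N^{-\eta}$, summable along $N_k=2^k$; the same propagation, now with the simpler ratio $r_{N_k}/r_N\leq 2^{2/d}$, completes the argument.

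The main technical obstacle in both parts is the off-diagonal variance estimate. In Part 1, pairs $(z_1,z_2)$ with large $\gcd$ relative to $\max(z_1,z_2)$ inflate $\mu(z_1,z_2)$ well beyond $(2r_N)^2$, and preventing these ``resonant'' pairs from dominating requires the three-range Fourier bookkeeping above, modelled on the one-dimensional treatment in~\cite{Aistleitner2021minimalgap}. In Part 2, the same phenomenon appears coordinate by coordinate, and the joint additive energy hypothesis on $\boldA_N^{D'}$ for $|D'|>d/2$ is exactly the input needed to keep the ``strongly resonant'' strata $S$ from swamping the variance.
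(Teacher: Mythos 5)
Your overall architecture matches the paper's: a counting function over distinct differences (Part 1) respectively over all pairs (Part 2), a second-moment/Chebyshev bound, first Borel--Cantelli along a sparse subsequence, and interpolation to all $N$ (your adaptive doubling subsequence is in fact a cleaner way to justify the interpolation step than the paper's). The gaps are in the variance estimates, which are the substance of the theorem. In Part 1 you assert $|\mu(z_1,z_2)-(2r_N)^2|\ll r_N\log a_N/\max(z_1',z_2')$ and then claim that ``bookkeeping of aligned versus generic pairs'' yields $\Var(T_N)\ll\E[T_N]$. Two problems. First, the factor $\log a_N$ is not controlled in terms of $N$ (the $a_n$ are merely strictly increasing integers), so taken literally the aligned pairs $z_2=2z_1$ alone could contribute $\asymp C_N(r_N\log a_N)^d$, which can dwarf $\E[T_N]=C_N(2r_N)^d$; the correct factor is $\log(1/r_N)\ll\log N$. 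Second, and more seriously, the cross terms in $\mu(z_1,z_2)^d-(2r_N)^{2d}$ force you to bound $\sum_{z_1,z_2}\bigl(\tfrac{(z_1,z_2)}{\sqrt{z_1z_2}}\bigr)^{j}$ for $j=1$ and $j=2$, and no elementary bookkeeping gives the bounds $\ll C_N^{1+\epsilon}/\log C_N$ (Bloom--Walker) and $\ll C_N(\log_2 C_N)^2$ (G\'al) that are actually needed: for $d=2$ and small $\delta$ even the Cauchy--Schwarz consequence $C_N^{3/2}\log_2 C_N$ of G\'al's bound for the $j=1$ sum is insufficient. This GCD-sum input is exactly the paper's Lemma~\ref{lmthm2}, and your sketch does not supply a substitute. (Also $\Var(T_N)\ll\E[T_N]$ is not what comes out --- the aligned pairs force an extra $(\log N)^{d}$ --- though the weaker bound still sums along your subsequence.)

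In Part 2 the dichotomy ``$\mu(b_i,c_i)=2r_N$ if $|b_i|=|c_i|$, and $(2r_N)^2(1+o(1))$ otherwise'' is false: for $c_i=2b_i$ one has $\mu(b_i,c_i)\geq r_N\gg (2r_N)^2$. Consequently stratifying by the set $S$ of exact coordinate matches misses the near-resonant four-tuples, which are the whole difficulty: a family of tuples with $c_i=2b_i$ for all $i$ has $S=\emptyset$ yet contributes $\gtrsim r_N^d$ each, and nothing in your stratification controls their number. The paper instead retains, for every coordinate, the Fourier error $\ll r_N\log(1/r_N)\tfrac{(b_i,c_i)}{\sqrt{|b_ic_i|}}$ and bounds the resulting multilinear GCD sum weighted by the representation functions $\mathcal{R}_N^{D'}$ via \eqref{higher dimensional gcd sum}; the hypothesis $E(\boldA_N^{D'})\ll N^{2+o(1)}$ enters through $\|\mathcal{R}_N^{D'}\|_2^2\leq E(\boldA_N^{D'})$, i.e.\ it controls all resonances at once, not only the exact matches. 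Your exponent check over the strata $S$ therefore does not establish the variance bound, and this step needs to be redone along the lines of the second half of Lemma~\ref{lmthm2}.
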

	Let $a\neq 0, b,c$ be real numbers. Then the minimal gap for $(\{a\alpha n^2+b\alpha n+c\alpha\})$ is $<\frac{1}{N^{2-o(1)}},$ for almost all $\alpha$~\cite[Theorem 1.8]{shvoreghvim2021}.
	We consider the sequence $(\{n^2\alpha_2+n\alpha_1\}),$ and ask same questions about minimal gap for almost all $(\alpha_1,\alpha_2).$
	In the next result, we answer this question for a more general sequence of linear forms $(\{a_n^{(1)}\alpha_1+\cdots+a_n^{(d)}\alpha_d\}).$
	\begin{theorem}\label{thm3}
		Let $\epsilon>0.$ Then for almost all $\boldalpha\in[0,1]^d$ 
		\begin{align*} 
			\delta_{\text{min}}(\boldalpha,N)\geq \frac{1}{\boldCN\log N(\log_2 N)^{1+\epsilon}}, \text{ for s.l. }N,
		\end{align*}
		\begin{align*} 
			\delta_{\text{min}}(\boldalpha,N)\leq\frac{1}{\boldCN}\frac{\log_2\left(a_N^{(1)}\cdots a_N^{(d)}\right)^{1/d}}{\log N\log_2 N}, \text{ for i.m. }N.
		\end{align*}
		Moreover, following~\cite{Aistleitner2021minimalgap} we remove the sequence dependent term $\frac{\log_2\left(a_N^{(1)}\cdots a_N^{(d)}\right)^{1/d}}{\log N\log_2 N}$ from the upper bound. For almost all $\boldalpha\in[0,1]^d,$
		\begin{align*} 
			\delta_{\text{min}}(\boldalpha,N)\leq\frac{1}{\boldCN}, \text{ for i.m. }N.
		\end{align*}
	\end{theorem}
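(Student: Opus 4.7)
The plan is to reduce everything to a one-dimensional metric Diophantine approximation problem: since
\[
\delta_{\text{min}}(\boldalpha,N) = \min\{\|\boldz\cdot\boldalpha\| : \boldz \in (\boldA_N-\boldA_N)^+\},
\]
I can adapt the strategy of~\cite{Aistleitner2021minimalgap} to the list of $\boldCN$ lattice ``denominators'' $\boldz$. The only geometric input needed is that, for any nonzero $\boldz\in\Z^d$ and any $\eta>0$, Fubini (integrating out a coordinate $\alpha_i$ with $z_i\neq 0$) gives
\[
\bigl|\{\boldalpha\in[0,1]^d:\|\boldz\cdot\boldalpha\|<\eta\}\bigr| = 2\eta.
\]

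For the lower bound I would apply Borel--Cantelli organized by the first index at which each difference appears, not by $N$. For $\boldz\in\boldA$ put $N(\boldz):=\min\{N:\boldz\in(\boldA_N-\boldA_N)^+\}$, set $\eta_N := (\boldCN\log N(\log_2 N)^{1+\epsilon})^{-1}$, and $G_\boldz := \{\boldalpha:\|\boldz\cdot\boldalpha\|<\eta_{N(\boldz)}\}$. Since exactly $\boldCN-\boldC_{N-1}$ vectors have $N(\boldz)=N$, grouping and the inequality $1-x\le -\log x$ applied to $x=\boldC_{N-1}/\boldCN$ yield
\[
\sum_{\boldz\in\boldA}|G_\boldz| \le \sum_N \frac{2(\boldCN-\boldC_{N-1})}{\boldCN\log N(\log_2 N)^{1+\epsilon}} \le \sum_N \frac{2(\log\boldCN-\log\boldC_{N-1})}{\log N(\log_2 N)^{1+\epsilon}}.
\]
Abel summation against the decreasing weight $1/(\log N(\log_2 N)^{1+\epsilon})$, combined with $\log\boldCN\le 2\log N$ and the convergence of $\sum 1/(N\log N(\log_2 N)^{1+\epsilon})$, makes the total finite. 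Borel--Cantelli then gives, almost surely, $\boldalpha\notin G_\boldz$ for all but finitely many $\boldz$. If $\delta_{\text{min}}(\boldalpha,N)<\eta_N$ held for infinitely many $N$, the witnesses $\boldz_N\in(\boldA_N-\boldA_N)^+$ could not repeat on an infinite subsequence (repetition forces $\boldz\cdot\boldalpha\in\Z$ for some nonzero $\boldz$, an event of measure zero) and would therefore furnish infinitely many distinct $\boldz$ with $\boldalpha\in G_\boldz$ (since $\eta_N\le\eta_{N(\boldz_N)}$), contradicting the above and proving~(a).

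For the upper bounds I would follow the divergence side of~\cite{Aistleitner2021minimalgap} via a second moment method applied to
\[
S_N(\boldalpha) := \#\{\boldz\in(\boldA_N-\boldA_N)^+ : \|\boldz\cdot\boldalpha\|<\eta_N\},
\]
with $\eta_N$ chosen so that $E[S_N]=2\eta_N\boldCN\gg 1$. For the variance, linearly independent pairs $(\boldz,\boldz')$ are essentially uncorrelated and contribute $\sim 4\eta_N^2$, while rank-one (collinear) pairs contribute an error controlled by a GCD-type sum on the scalar coefficients along each collinear lattice line. Chung--Erd\H{o}s then yields $P(S_N\ge 1)\gg 1$ along a dyadic subsequence, promoted to i.m.\ $N$ by a standard tail-set/$0$--$1$ argument, giving~(b). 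The sharper bound~(c) is obtained by replacing the coarse union bound by a Hooley-type smoothed counting exactly as in~\cite{Aistleitner2021minimalgap}, which decouples the argument from the size of the individual $a_N^{(i)}$. The main obstacle I anticipate is the variance estimate producing the precise factor $(a_N^{(1)}\cdots a_N^{(d)})^{1/d}$ in~(b): the naive coordinatewise bound loses a factor of $d$, and it is an AM--GM averaging over the coordinate gcds of collinear pairs that produces the clean $1/d$-power geometric mean.
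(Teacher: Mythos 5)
Your lower bound argument is sound and is essentially the paper's: the paper enumerates the distinct differences $\boldz_n$ by order of first appearance, applies the first Borel--Cantelli lemma to the sets $\{\boldalpha:\|\boldz_n\cdot\boldalpha\|<\psi(n)\}$ with $\psi(n)\asymp (n\log n(\log_2 n)^{1+\epsilon})^{-1}$ (your Fubini identity is \cite[Lemma 8]{sprindzuk}), and converts $\psi(\boldCN)$ into the stated bound via $N\leq\boldCN\leq N^2$. Your grouping by $N(\boldz)$ plus Abel summation is a heavier but valid way to verify the same convergence.

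The upper bounds are where the proposal has a genuine gap. For the second bound, a single-scale second moment with $\E[S_N]=2\eta_N\boldCN\gg 1$ forces $\eta_N\gg 1/\boldCN$ and can therefore only ever yield the threshold $1/\boldCN$, i.e.\ the third bound; at the scale $\eta_N=\frac{1}{\boldCN}\frac{\log_2(a_N^{(1)}\cdots a_N^{(d)})^{1/d}}{\log N\log_2 N}$ the expected count over one block is $o(1)$ and Chung--Erd\H{o}s gives nothing. That extra factor comes from a different mechanism: the paper applies the Duffin--Schaeffer theorem for systems of linear forms (Ramirez, \cite{remirez}) with $\psi(n)=\frac{\log_2\gcd(\boldz_n)}{n\log n\log_2 n}$, where divergence of $\sum_n\varphi(\gcd(\boldz_n))\psi(n)/\gcd(\boldz_n)$ follows from $\varphi(q)/q\gg(\log_2 q)^{-1}$; the $1/d$-th power is then the triviality $\gcd(\boldz_n)\leq\big(z_n^{(1)}\cdots z_n^{(d)}\big)^{1/d}$ sitting inside a $\log\log$, not the output of an AM--GM averaging in a variance estimate as you anticipate. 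For the third bound your skeleton (collinear versus non-collinear pairs, Chung--Erd\H{o}s over dyadic blocks, zero-one law) does match the paper, but the decisive device is missing: one must restrict to approximations $q$ all of whose common prime factors with $\boldz_n$ exceed $4^k$, which keeps $\lambda(S_n^*)\gg 1/(n\log 4^k)$ while making the overlaps of collinear pairs controllable by the Pollington--Vaughan/Aistleitner--El-Baz--Munsch estimates (after projecting the collinear family to one dimension via the measure-preserving map $T$). A ``Hooley-type smoothing'' is not what does the work, and without the prime restriction the collinear overlap sums are not $\ll 1$.
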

	
	\begin{theorem}\label{thm3.1}
		If $\min_{i\leq d}E(A_N^{\{i\}})\ll N^{2+o(1)}$ for $N\gg 1$ then, for all $\delta>0,$ for almost all $\boldalpha\in[0,1]^d$
		\[\delta_{\text{min}}(\boldalpha,N)\leq\frac{N^\delta}{N^2}, \text{ for s.l. }N.\]
	\end{theorem}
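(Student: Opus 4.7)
The plan is to reduce the $d$-dimensional linear-form problem to a $1$-dimensional \emph{shifted} minimal-gap problem via Fubini, and then adapt Rudnick's second-moment argument to the shifted setting.

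For each $N$, by hypothesis some index $i(N) \in \{1, \dots, d\}$ realises $E(A_N^{\{i(N)\}}) \ll N^{2+o(1)}$. I pick an increasing subsequence $N_k \in [2^k, 2^{k+1})$ (which exist for all large $k$), and for each $k$ set $i_0 = i(N_k)$. For any fixed $(\alpha_j)_{j \neq i_0}$, writing $\beta_n = \sum_{j \neq i_0} a_n^{(j)} \alpha_j$, we have
\[
\delta_{\text{min}}(\boldalpha, N_k) = \min_{\substack{n \neq m \\ n, m \leq N_k}} \bigl\|(a_n^{(i_0)} - a_m^{(i_0)})\alpha_{i_0} + (\beta_n - \beta_m)\bigr\|,
\]
which is the minimal gap of a $1$-dimensional \emph{shifted} sequence. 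By Fubini, it suffices to show that for any fixed real sequence $(\beta_n)$ and any $\delta > 0$, almost every $\alpha_{i_0} \in [0,1]$ satisfies this minimum $\leq N_k^\delta/N_k^2$, with failure probability summable in $k$.

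For the shifted $1$-D statement I run Rudnick's second-moment argument with $L = N_k^{2-\delta}$ on
\[
R_{N_k}(\alpha_{i_0}) = \sum_{\substack{n \neq m \\ n, m \leq N_k}} \mathbf{1}_{\{\|(a_n^{(i_0)} - a_m^{(i_0)})\alpha_{i_0} + (\beta_n - \beta_m)\| \leq 1/L\}},
\]
so that $\E[R_{N_k}] = 2 N_k(N_k-1)/L \asymp N_k^\delta$. For the second moment, I Fourier-expand each indicator on $\R/\Z$; the shifts $\beta_n - \beta_m$ enter only through unit-modulus phase factors that leave the magnitude of every error term unchanged. Consequently, Rudnick's variance bound $\Var(R_{N_k}) \ll E(A_{N_k}^{\{i_0\}}) (\log N_k)^c/L$ (for some $c$) goes through, giving $\Var(R_{N_k})/\E[R_{N_k}]^2 \ll N_k^{-\delta + o(1)} (\log N_k)^c$, which is summable in $k$ by dyadic sparsity. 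Chebyshev's inequality and Borel--Cantelli then yield $R_{N_k} \geq 1$ for all sufficiently large $k$, almost surely in $\boldalpha$, so $\delta_{\text{min}}(\boldalpha, N_k) \leq N_k^{\delta - 2}$. Monotonicity of $\delta_{\text{min}}$ in $N$ with $N_{k+1}/N_k \leq 4$ then extends the bound to all sufficiently large $N$, with the constant factor absorbed by shrinking $\delta$.

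The main obstacle is verifying that Rudnick's variance estimate genuinely survives the introduction of the shifts. Writing $b_i = a_{n_i}^{(i_0)} - a_{m_i}^{(i_0)}$, the diagonal contribution ($b_1 = b_2$) to $\E[R_{N_k}^2]$ is controlled directly by $E(A_{N_k}^{\{i_0\}})/L$; the off-diagonal terms, indexed by nonzero integer solutions of $k_1 b_1 + k_2 b_2 = 0$ (parameterised as $(k_1, k_2) = t(b_2/d, -b_1/d)$ for $t \in \Z \setminus\{0\}$ and $d = \gcd(b_1, b_2)$), are bounded by $\sum_{t \neq 0}|\hat f(tb_2/d)|\,|\hat f(tb_1/d)|$, a quantity that depends only on $|\hat f|$ and not on any phase, so is controlled exactly as in Rudnick's original proof. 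All remaining steps are standard.
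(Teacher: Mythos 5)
Your proposal is correct and is essentially the paper's own argument: both run the second-moment (variance) method on the count of pairs with $\|(\bolda_n-\bolda_m)\cdot\boldalpha\|\leq 1/(2M)$, reduce the off-diagonal terms to the single-coordinate constraint $jv=tw$ with $v,w$ ranging over differences of $(a_n^{(i_0)})$ weighted by the representation function, and then invoke the GCD-sum bound together with $E(A_N^{\{i_0\}})\ll N^{2+o(1)}$, a dyadic subsequence, Chebyshev, Borel--Cantelli, and monotonicity. The only difference is presentational: the paper Fourier-expands in all $d$ variables and relaxes the vector orthogonality constraint $j(\bolda_n-\bolda_m)=t(\bolda_k-\bolda_l)$ to its $i_0$-th coordinate, whereas you condition on the other $d-1$ coordinates and observe that the resulting shifts contribute only unimodular phases --- both routes land on the same variance bound.
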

	
	\begin{remark*}
		Let $d=2$ and $a_n^{(i)}=n^i$ for $i=1,2.$ Then $\min_{i\leq 2}E(A_N^{\{i\}})\ll N^{2+\epsilon}$ for any $\epsilon.$ It can be easily calculated that $\boldCN=(N-1)(N-2)/2\approx N^2.$ Then above theorem implies that for almost all $\boldalpha$ the minimal gap for $n^2\alpha_2+n\alpha_1$ is 
		\begin{align*}
			&\ll \frac{1}{N^2\log N(\log_2 N)^{1/2}}\text{  for i.m. }N \text{ and,}\\ 
			&\ll \frac{N^\epsilon}{N^2}\text{  for s.l. }N. 
		\end{align*} 
	\end{remark*}
	
	Now we state our last result for the van der Corput sequence (see Section~\ref{section van der Corput} for definition).
	\begin{theorem}\label{thm4}
		For $b\geq 3,$ the minimal gap of the van der Corput sequence $g_b(n)$ is given by
		\[\frac{1}{bN}\leq\delta_{\text{min}}((g_b(n)),N)\leq \frac{b}{N},\text{ for all } N.\]
		And for $b=2,$ we have the same lower bound but upper bound becomes 
		\[\delta_{\text{min}}((g_b(n)),N)\leq \frac{1}{N},\text{ for all } N.\]
	\end{theorem}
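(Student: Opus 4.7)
\textbf{Proof proposal for Theorem~\ref{thm4}.}

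The plan is to treat the two bounds independently, in both cases exploiting the digit-reversal description of $g_b$. Fix $N\ge 2$ and let $k=k(N)$ be the unique positive integer with $b^{k-1}\le N<b^{k}$.

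\emph{Lower bound.} Every $n\le N<b^{k}$ has at most $k$ base-$b$ digits, so $g_b(n)=\sum_{i=0}^{k-1}d_i(n)/b^{i+1}$ is a multiple of $1/b^{k}$ in $[0,1)$. The map $g_b$ is a bijection from $\{0,1,\dots,b^{k}-1\}$ onto $\{j/b^{k}:0\le j<b^{k}\}$ (reversing the $k$-digit string is an involution), so $g_b(1),\dots,g_b(N)$ are $N$ pairwise distinct multiples of $1/b^{k}$. Any two distinct such multiples have nearest-integer distance at least $1/b^{k}$, and $b^{k}\le bN$ by the choice of $k$, giving $\delta_{\min}((g_b(n)),N)\ge 1/b^{k}\ge 1/(bN)$.

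\emph{Upper bound.} When $N\ge b$, so $k\ge 2$, I would exhibit the two explicit indices $m:=b^{k-2}$ and $m':=b^{k-1}$, both of which lie in $\{1,\dots,N\}$. A direct digit computation yields $g_b(m)=1/b^{k-1}$ and $g_b(m')=1/b^{k}$, and since $(b-1)/b^{k}<1/2$ whenever $k\ge 2$ and $b\ge 2$, the nearest-integer distance between them is exactly
\[
\|g_b(m)-g_b(m')\|=\frac{b-1}{b^{k}}.
\]
From $N<b^{k}$ we deduce $(b-1)N<b^{k+1}$, hence $(b-1)/b^{k}\le b/N$ for every $b\ge 2$; specialising to $b=2$ gives $1/2^{k}\le 1/N$, which is the stronger bound stated in the binary case. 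The remaining range $2\le N<b$ can occur only for $b\ge 3$, and there $g_b(n)=n/b$ for $n\le N$, so the minimal gap equals $1/b\le b/N$.

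\emph{Main obstacle.} There is essentially no serious obstruction: the structural input is the observation that for $N<b^{k}$ the entire finite sequence lives on the grid $b^{-k}\mathbb{Z}\cap[0,1)$, from which both the lower bound and the existence of a close pair follow. The one point needing care is the case split between $b=2$ and $b\ge 3$: the single explicit pair $(b^{k-2},b^{k-1})$ produces the uniform gap $(b-1)/b^{k}$, which coincides with the sharper bound $1/N$ precisely when $b=2$ and must be relaxed to $b/N$ for higher bases.
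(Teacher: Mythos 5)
Your proof is correct and follows essentially the same strategy as the paper: the lower bound comes from observing that for $n<b^{k}$ the values $g_b(n)$ are distinct points on the grid $b^{-k}\mathbb{Z}\cap[0,1)$, and the upper bound from exhibiting an explicit close pair. The only (immaterial) differences are that you use the pair $(b^{k-2},b^{k-1})$ with gap $(b-1)/b^{k}$ where the paper uses $(b^{k-1},2b^{k-1})$, and that you argue directly for general $N$ instead of interpolating between the special values $N=b^{k}-1$ and $N=b^{k}$ via monotonicity of the minimal gap.
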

	
	\begin{remark}
		Although we do not have any proof but we believe that for the Halton sequence $g_{\boldb}(n)$ (see Section~\ref{section van der Corput} for definition)
		\[\delta_{\text{min}}^\infty(g_{\boldb}(n),N)\asymp \frac{1}{N^{1/d}}, \text{ for all } N.\]
	\end{remark}
	
	\begin{remark}
		Instead of considering $\bolda_n$ to be an integer sequence in the higher dimensional sequence $(\{\bolda_n\boldalpha\})$ one can consider positive real sequence also, but in that case following similar arguments as in ~\cite{shvoreghvim2021} we can obtain bounds of the minimal gaps for sequences of the form  $(\{\boldsymbol{a_n}\boldalpha\}),$ where each component of $\bolda_n$ has some growth conditions.
	\end{remark}

	\section{Minimal gap for higher dimensional `integer' sequences}
	In this section, we prove Theorem~\ref{theorem1} and Theorem~\ref{theorem2}. The proofs of them follow the ideas of~\cite{Rudnick2018minimalgap} and~\cite{Aistleitner2021minimalgap}.
	
	For $a\geq 0,$ denote by $U(a)$ the set of $\boldsymbol{y}\in\R^r$ for which $0\leq y_i<a\:(i=1,\dots,r).$ For $\boldsymbol{l},\boldsymbol{l'}\in\Z^r$ and a positive integer $n,$ denote $(\boldsymbol{l},n)$ and $(\boldsymbol{l},\boldsymbol{l'})$ the gcd of $n$ and the components of $\boldsymbol{l},$ and the gcd of the components of  $\boldsymbol{l}$ and $\boldsymbol{l'}$ respectively. And $|\cdot|_\infty$ denotes the usual sup norm. In the subsequent sections, $\lambda$ denotes one, as well as a higher dimensional Lebesgue measure.
	\begin{lemma}{\cite[Theorem 1]{Gllaghar}}\label{gallaghar lemma}
		Let $d\geq 2.$ For each sequence of numbers $\psi(n)$ between $0$ and $1,$ there are infinitely many solutions $n,\boldsymbol{l}$ of 	
		\[n\boldx-\boldsymbol{l}\in U(\psi(n)),\:\:\: (\boldsymbol{l},n)=1\]
		for almost all $\boldx$ or almost no $\boldx$ according as $\Sigma \psi(n)^d$ diverges or converges.
	\end{lemma}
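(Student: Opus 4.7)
The plan is to establish a two-sided zero-one law: the convergence half by direct Borel-Cantelli, and the divergence half by a pairwise quasi-independence estimate combined with a zero-one law. I write $E_n\subseteq[0,1)^d$ for the set of $\boldx$ admitting some $\boldsymbol{l}\in\Z^d$ with $n\boldx-\boldsymbol{l}\in U(\psi(n))$ and $(\boldsymbol{l},n)=1$; by $\bmod 1$ periodicity it suffices to analyze $\limsup E_n$ inside the unit cube.

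For the convergence direction I would drop the coprimality constraint to produce a superset of $E_n$: this is a disjoint union of $n^d$ translates of $(1/n)U(\psi(n))$ lying inside $[0,1)^d$, so its measure is exactly $\psi(n)^d$. Summing, $\sum_n\lambda(E_n)\le\sum_n\psi(n)^d<\infty$, and the first Borel-Cantelli lemma gives $\lambda(\limsup E_n)=0$.

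For the divergence direction I would first compute $\lambda(E_n)=\psi(n)^d\prod_{p\mid n}(1-p^{-d})\ge \psi(n)^d/\zeta(d)$, using the Jordan totient $J_d(n)$ to count residues $\boldsymbol{l}\bmod n$ with $(\boldsymbol{l},n)=1$, so that $\sum_n\lambda(E_n)=\infty$ whenever $\sum_n\psi(n)^d=\infty$. The central step is a pairwise quasi-independence bound
\[
\lambda(E_m\cap E_n)\le C\,\lambda(E_m)\,\lambda(E_n)\qquad(m\neq n)
\]
with $C$ independent of $m,n$. Granting this, the Chung-Erd\H{o}s form of the second Borel-Cantelli lemma yields $\lambda(\limsup E_n)>0$, and a Cassels-Gallagher zero-one law (the limsup set is invariant mod $1$ under every rational translation, and these are dense in the torus) upgrades the conclusion to full measure.

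The hard part will be the quasi-independence estimate. A point in $E_m\cap E_n$ lies simultaneously within distance $\psi(m)/m$ of some rational $\boldsymbol{l}/m$ with $(\boldsymbol{l},m)=1$ and within $\psi(n)/n$ of some $\boldsymbol{l'}/n$ with $(\boldsymbol{l'},n)=1$, and one must bound how often this can happen. My approach would be a M\"obius inversion over the divisibility conditions $(\boldsymbol{l},n)\mid k$ to reduce to unrestricted sums, followed by a geometric argument that separates the diagonal contribution $\boldsymbol{l}/m=\boldsymbol{l'}/n$ from off-diagonal overlaps bounded via a Farey-type spacing. In dimension one this collapses to the classical spacing $|l/m-l'/n|\ge 1/(mn)$, but for $d\ge 2$ the joint coprimality $\gcd(l_1,\dots,l_d,n)=1$ is strictly weaker than componentwise coprimality and demands extra care when extracting separation between distinct rational vectors.
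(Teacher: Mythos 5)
The paper does not prove this lemma at all: it is quoted verbatim as Theorem 1 of Gallagher's 1965 paper and used as a black box, so there is no in-paper argument to compare yours against. Measured against Gallagher's actual proof, your outline has the right architecture: the convergence half via the first Borel--Cantelli lemma after dropping coprimality, the computation $\lambda(E_n)=\psi(n)^d\prod_{p\mid n}(1-p^{-d})\ge\psi(n)^d/\zeta(d)$ via the Jordan totient (this lower bound is precisely where $d\ge 2$ enters and is the reason no Duffin--Schaeffer-type hypothesis is needed), and the divergence half via quasi-independence, Chung--Erd\H{o}s, and a zero-one law.

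However, the proposal is not a proof, because the one step that carries essentially the entire content of the theorem --- the overlap estimate $\lambda(E_m\cap E_n)\ll\lambda(E_m)\lambda(E_n)$ for $m\neq n$ --- is explicitly deferred. Everything else in the argument is routine; this estimate is not. The difficulty is exactly where you locate it but do not resolve it: when $g=\gcd(m,n)>1$, the anchors $\boldsymbol{l}/m$ and $\boldsymbol{l}'/n$ can coincide in some coordinates and not others (the equation $l_in=l'_im$ has $g$ solutions per coordinate per period), and these partial coincidences contribute overlap terms that are far larger than $\psi(m)^d\psi(n)^d$ unless the joint coprimality $(\boldsymbol{l},n)=1$ is exploited across all $d$ coordinates simultaneously to exclude them; a sentence promising ``M\"obius inversion followed by a Farey-type spacing argument'' does not carry this out. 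A secondary error: your justification of the zero-one law is wrong as stated, since $\limsup E_n$ is \emph{not} invariant under rational translations --- translating $\boldsymbol{l}$ by a fixed vector destroys the condition $(\boldsymbol{l},n)=1$ --- and the Cassels--Gallagher zero-one laws instead rest on invariance under the dilations $\boldsymbol{x}\mapsto p\boldsymbol{x}\bmod 1$. Since the lemma is a known, citable theorem, the honest options are to cite it (as the paper does) or to prove the overlap estimate in full; the present sketch does neither.
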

	\begin{proof}[Proof of Theorem~\ref{theorem1}]
		From definition of minimal gap,~\eqref{one dim zn} and~\eqref{high dim zn} we get
		\begin{align*}
			\delta_{\text{min}}^{\infty}(\boldalpha,N)=\min_{1\leq n\leq\boldCN}\|\boldzn\boldalpha\|_{\infty}^{(\text{nid})}
		\end{align*}
		\textbf{Lower bound.} Let $n\geq 1$ be an integer and 
		\begin{align*}
			\psi(n)=\frac{1}{n^{1/d}(\log\sqrt{n})^{1/d}(\log_2\sqrt{n})^{1/d+\epsilon/d}}.
		\end{align*}
		We define
		\begin{align*}
			&I_n^{(i)}=[0,1]\cap\left(\bigcup_{0\leq a\leq z_{n}^{(i)}}\left(\frac{a}{z_n^{(i)}}-\frac{\psi(n)}{z_n^{(i)}},\frac{a}{z_n^{(i)}}+\frac{\psi(n)}{z_n^{(i)}}\right)\right),
		\end{align*}
		$\displaystyle S_n=\prod_{i=1}^{d}I_n^{(i)},$
		and $S=\liminf_{n\to\infty}S_n^c.$ It is clear that $\mu\left(I_N^{(i)}\right)\leq\min(2\psi(n),1),$ which gives us $\lambda(S_n)\leq\min((2\psi(n))^d,1).$ So we obtain
		\begin{align*}
			\sum_{n=1}^{\infty}\lambda(S_n)\leq\sum_{n=1}^{\infty}\frac{1}{n\log \sqrt{n}(\log_2 \sqrt{n})^{1+\epsilon}}<\infty.
		\end{align*}
		Then, by the first Borel-Cantelli lemma $\lambda(\limsup_{n\to\infty}S_n)=0,$ so $\lambda(S)=1.$ 
		Let $\boldalpha\in S.$ Then $\|\boldzn\boldalpha\|_{\infty}\geq\psi(n)\geq\psi(n)$ for s.l. $n.$
		Note that $\psi$ is a decreasing function. So we get for sufficiently large $N$ 
		\begin{align*}
			&\|\boldzn\boldalpha\|_{\infty}\geq \psi(n)\geq\psi(\boldCN),\forall n\leq \boldCN\\
			\implies&\|(\bolda_n-\bolda_m)\boldalpha\|_{\infty}\geq \frac{1}{\boldCN^{1/d}(\log N)^{1/d}(\log_2 N)^{1/d+\epsilon}}, \text{ for all } \bolda_n,\bolda_m\in(\boldA_N-\boldA_N)^{+},
		\end{align*}
		because of~\eqref{high dim zn} and $\boldCN\leq N^2$. This proves the lower bound of $\delta_{\text{min}}^{\infty}(\boldalpha, N).$\\

		\textbf{Upper bound.}
		We define 
		\begin{align*}
			\tilde{S}_n=\left\{\boldalpha\in[0,1]^d:|z_n\boldalpha-\boldq|_{\infty}<\psi(n)\text{ for some }\boldq\in\Z^d \text{ with }(z_n,\boldq)=1\right\}
		\end{align*}
		Then by Lemma~\ref{gallaghar lemma} we get $\lambda(\limsup_{n\to\infty}\tilde{S}_n)=1$ if $\sum_{n\geq 1}\psi(n)^d=\infty.$\\
		We choose $\psi(n)=\frac{1}{(n\log n\log_2 n)^{1/d}},$ which satisfies the above divergence criterion. So,
		for any $\boldalpha\in\limsup_{n\to\infty} \tilde{S}_n$ we have 
		\begin{align*}
			\|z_n\boldalpha\|_{\infty}^{(\text{nid})}\leq \psi(n) \text{ for i.m. }n.	
		\end{align*}
		Therefore, for almost all $\boldalpha\in[0,1]^d,$ $\delta_{\text{min}}^\infty(\boldalpha,N)\leq\frac{1}{(C_N\log N\log_2 N)^{1/d}}$ for i.m. $N.$
	\end{proof}

	\subsection{Proof of Theorem~\ref{theorem2}}
	To prove upper bounds for sufficiently large $N$ we use the variance method and follow~\cite{Rudnick2018minimalgap}. 
	Let $N, M\in\N.$ We define 
	\begin{align*}
		D(N,M)(\boldalpha)=\sum_{1\leq n\leq C_N}\chi_{\|z_n\boldalpha\|_\infty\leq\frac{1}{2M}}\text{ and}\\
		\widetilde{D}(N,M)(\boldalpha)=\sum_{1\leq n\neq m \leq N}\chi_{\|(\bolda_n-\bolda_m)\boldalpha\|_\infty\leq\frac{1}{2M}}.
	\end{align*}
	It is clear that, for $(\{\bolda_n\boldalpha\})$ and $(\{a_n\boldalpha\})$ the minimal gap $\delta_{\text{min}}^{(\infty)}(\boldalpha,N)\leq 1/2M\text{ if } D(N,M)(\boldalpha)\geq 1$ and $\widetilde{D}(N,M)(\boldalpha)\geq 1$ respectively.
	
	Our aim is to show that, for almost all $\boldalpha,$  $D(N,M)(\boldalpha)\geq 1$ and $\widetilde{D}(N,M)(\boldalpha)\geq 1$ for sufficiently large $N$ and for a suitable $M.$ 
	
	Note that
	\begin{align*}
		\E[D(N,M)]=C_N/M^d,\\
		\E[\widetilde{D}(N,M)]=(N^2-N)/M^d.
	\end{align*}
	Also,
	\begin{align*}
		\chi(\boldx)\sim\sum_{\boldk\in\Z^d}c_{\boldk}e(\boldk\cdot\boldx)
	\end{align*}
	where $c_{\boldk}=c_{k_1}\cdots c_{k_d},\: c_0=1/M$  and 
	$|c_{k_i}|\leq\min(1/M,1/\pi|k_i|),i\leq d.$ Before estimating the variance we state some results. For $v,w\in\Z\setminus 0$
	\begin{align}\label{fourier coeff to gcd sum}
		\sum_{\substack{i,j\in\Z\setminus0\\ vi=wj}}|c_ic_j|\ll\frac{\log M}{M}\frac{(v,w)}{\sqrt{|vw|}}, \text{ see ~\cite[page 474]{aistleitner2017additive}}.
	\end{align}
	Let $d'\geq 1$ be an integer. For any finite $\boldA\subset\N^{d'}$ and $f: \boldA\to\R^+$ with $\|f\|_1\geq 3$ and $|\boldA|\geq\log\|f\|_1,$ the bound of the gcd sum (for $d'=1$ see~\cite[Theorem 2]{Bloom2019GCDSA} and for $d'>1$ see~\cite[Proposition 3.1]{bera das mukhopadhyay2023})
	\begin{align}\label{higher dimensional gcd sum}
		\sum_{\bolda,\boldb\in \boldA}f(\bolda)f(\boldb)\prod_{i\leq d' }\frac{(a_i,b_i)}{\sqrt{a_ib_i}}\ll\frac{|\boldA|^{\epsilon}}{(\log\|f\|_1+O(1))^{d'}}\|f\|_2^2.
	\end{align}
	
	\begin{lemma}\label{lmthm2}
		Let $\epsilon>0.$ Then	 \[\Var(D(N,M))\ll
		\frac{\log M}{M^{2d-1}}\frac{C_N^{1+\epsilon}}{\log C_N+O(1)}+\frac{(\log M)^2}{M^{2d-2}}C_N(\log_2 C_N)^2+\sum_{3\leq d'\leq d}\frac{(\log M)^{d'}}{M^{2d-d'}}C_N,\] and
		\[\Var(\widetilde{D}(N,M))\ll\sum_{\substack{D'\subseteq D\\1\leq |D'|=d'\leq d}}\frac{(\log M)^{d'}}{M^{2d-d'}}\frac{N^{\epsilon}}{(\log N+O(1))^{d'}}E(\boldA_N^{D'}).\]
	\end{lemma}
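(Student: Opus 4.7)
The plan is to carry out a Fourier-analytic variance computation, reducing each variance to a weighted GCD sum and then applying \eqref{fourier coeff to gcd sum} and \eqref{higher dimensional gcd sum}. The box indicator $\chi_{\|\boldx\|_\infty\le 1/(2M)}$ factors as a product of $d$ one-dimensional indicators, each with Fourier coefficients $c_k$ satisfying $c_0=1/M$ and $|c_k|\le\min(1/M,1/(\pi|k|))$. First I would expand both $\chi_{\|z_n\boldalpha\|_\infty\le 1/(2M)}$ and $\chi_{\|(\bolda_n-\bolda_m)\boldalpha\|_\infty\le 1/(2M)}$ as $d$-fold Fourier series, square $D$ and $\widetilde D$, take expectations, and invoke character orthogonality. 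For each pair of labels (or 4-tuple, for $\widetilde D$) the surviving modes satisfy one linear relation per coordinate, so the contribution factors as $\prod_{i=1}^d(c_0^2 + T_i)$; subtracting $\E[D]^2 = C_N^2/M^{2d}$ (resp.\ $\E[\widetilde D]^2$) removes the all-zero term $c_0^{2d}$.

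A coordinatewise application of \eqref{fourier coeff to gcd sum} then gives $|T_i|\ll \frac{\log M}{M}\,\frac{(v_i,w_i)}{\sqrt{|v_iw_i|}}$, where $(v_i,w_i)=(z_{n_1},z_{n_2})$ for $D$ and $(v_i,w_i)=(a_{n_1}^{(i)}-a_{m_1}^{(i)},\,a_{n_2}^{(i)}-a_{m_2}^{(i)})$ for $\widetilde D$. Expanding the product $\prod_i(c_0^2+T_i) - c_0^{2d}$ and indexing its $2^d-1$ nonzero subproducts by the ``nontrivial'' subset $D'\subseteq D$ with $|D'|=d'$ produces the overall weight $(\log M)^{d'}/M^{2d-d'}$ that appears in both claimed bounds.

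For $\widetilde D$, I would regroup the remaining sum over 4-tuples according to the $D'$-coordinate difference vectors $\boldv=\bolda_{n_1}^{D'}-\bolda_{m_1}^{D'}$ and $\boldw=\bolda_{n_2}^{D'}-\bolda_{m_2}^{D'}$, recognising it as $\sum_{\boldv,\boldw}\mathcal{R}_N^{D'}(\boldv)\mathcal{R}_N^{D'}(\boldw)\prod_{i\in D'}(v_i,w_i)/\sqrt{|v_iw_i|}$. A direct application of \eqref{higher dimensional gcd sum} with $f=\mathcal{R}_N^{D'}$, $\|f\|_1\le N^2$, and $\|f\|_2^2\le E(\boldA_N^{D'})$ then supplies the factor $N^\epsilon(\log N+O(1))^{-d'}E(\boldA_N^{D'})$ for each $D'$, completing this case.

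The $D$-case is more delicate because the scalar $z_n$ forces the same gcd ratio to appear in every coordinate; the remaining sum reduces to $\sum_{n_1,n_2\le C_N}\bigl((z_{n_1},z_{n_2})/\sqrt{z_{n_1}z_{n_2}}\bigr)^{d'}$, for which three distinct inputs are needed. For $d'=1$ I would apply \eqref{higher dimensional gcd sum} with $f\equiv 1$ on $\{z_1,\dots,z_{C_N}\}$, giving $\ll C_N^{1+\epsilon}/(\log C_N+O(1))$. For $d'=2$ the bound $\ll C_N(\log_2 C_N)^2$ is exactly G\'al's classical theorem on $\sum(m,n)^2/(mn)$ for $N$-element sets of distinct positive integers. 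For $d'\ge 3$, since $d'/2>1$ the Dirichlet series $\sum_u u^{-d'/2}$ converges, so fixing $n_1$, writing $d=(z_{n_1},z_{n_2})$, and summing first over the cofactor $u=z_{n_2}/d$ (dropping the indicator $du\in\{z_n\}$) bounds the inner sum by a constant, hence yields $\ll C_N$ overall. The main technical obstacle is thus orchestrating these three different GCD-sum inputs for the $D$-case; once in place, the combinatorial bookkeeping from expanding $\prod_i(c_0^2+T_i)-c_0^{2d}$ closes the proof.
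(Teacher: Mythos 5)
Your proposal follows essentially the same route as the paper: Fourier-expand the box indicators, use orthogonality to reduce each variance to sums indexed by the subset $D'$ of coordinates carrying nonzero frequencies with weight $(\log M)^{d'}/M^{2d-d'}$ via \eqref{fourier coeff to gcd sum}, and then invoke \eqref{higher dimensional gcd sum} with $f=\mathcal{R}_N^{D'}$ for $\widetilde{D}$, and the trio of \eqref{higher dimensional gcd sum}, G\'al's theorem, and a direct $\ll C_N$ bound for the $d'=1,2,\ge 3$ cases of $D$. Your argument for the $d'\ge 3$ case (summing over the coprime cofactors using convergence of $\sum u^{-d'/2}$) correctly supplies a detail the paper only asserts.
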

	\begin{proof}
		\begin{align*} 
			&\Var(D(N,M))=\int_{[0,1]^d}\left(\sum_{1\leq n\leq C_N}\sum_{\boldk\in\Z^d\setminus0}c_{\boldk}e((z_n\boldalpha)\cdot\boldk)\right)^2d\boldalpha\nonumber\\
			&=\sum_{\substack{D'\subseteq D\\|D'|=d'\leq d}}\sum_{\substack{1\leq n,m\leq C_N}}\sum_{\boldk,\boldk'\in(\Z\setminus0)^{D'}}c_{\boldk}c_{\boldk'}\left(\frac{1}{M}\right)^{2(d-d')}\int_{[0,1]^{d'}}e((z_n\boldalpha)\cdot\boldk-(z_m\boldalpha)\cdot\boldk')d\boldalpha\nonumber\\
			&=\sum_{\substack{D'\subseteq D\\|D'|=d'\leq d}}\left(\frac{1}{M}\right)^{2(d-d')}\sum_{1\leq n,m\leq C_N}\sum_{\substack{\boldk,\boldk'\in(\Z\setminus0)^{D'}\\z_n\boldk=z_m\boldk'}}c_{\boldk}c_{\boldk'}\nonumber\\
			&\ll\sum_{\substack{D'\subseteq D\\|D'|=d'\leq d}}\frac{(\log M)^{d'}}{M^{2d-d'}}\sum_{1\leq n,m\leq C_N}\frac{(z_m,z_n)^{d'}}{(z_nz_m)^{d'/2}}\nonumber\\
			&\ll\sum_{d'\leq d}\frac{(\log M)^{d'}}{M^{2d-d'}}\sum_{1\leq n,m\leq C_N}\frac{(z_n,z_m)^{d'}}{(z_nz_m)^{d'/2}}\nonumber\\
			&\ll \frac{\log M}{M^{2d-1}}\frac{C_N^{1+\epsilon}}{\log C_N+O(1)}+\frac{(\log M)^2}{M^{2d-2}}C_N(\log_2 C_N)^2+\sum_{3\leq d'\leq d}\frac{(\log M)^{d'}}{M^{2d-d'}}C_N,
		\end{align*} 
		here in the fourth line, we used~\eqref{fourier coeff to gcd sum} and in the last line~\eqref{higher dimensional gcd sum} for $d'=1,$ G\'al's~\cite{gal} result for $d'=2,$ and the fact that, the innermost gcd sum is $\ll C_N$ for $d'\geq 3.$ 
		
		A similar calculation shows that
		\begin{align*}
			\Var(\widetilde{D}(N,M))&=\sum_{\substack{D'\subseteq D\\|D'|=d'}}\left(\frac{1}{M}\right)^{2(d-d')}\sum_{\boldv,\boldsymbol{w}\in(\Z\setminus 0)^{D'}}\mathcal{R}_{N}^{D'}(\boldsymbol{w})\mathcal{R}_{N}^{D'}\sum_{\substack{\boldk,\boldk'\in(\Z\setminus0)^{D'}\\\boldv\boldk=\boldsymbol{w}\boldk'}}c_{\boldk}c_{\boldk'}\\
			&\ll\sum_{\substack{D'\subseteq D\\|D'|=d'}}\frac{(\log M)^{d'}}{M^{2d-d'}}\sum_{\boldv,\boldsymbol{w}\in(\Z\setminus 0)^{D'}}\mathcal{R}_{N}^{D'}(\boldsymbol{w})\mathcal{R}_{N}^{D'}(\boldv)\prod_{i\leq D'}\frac{(w^{(i)},v^{(i)})}{\sqrt{v^{(i)}w^{(i)}}}\\
			&\ll\sum_{\substack{D'\subseteq D\\|D'|=d'}}\frac{(\log M)^{d'}}{M^{2d-d'}}\frac{N^{\epsilon}}{(\log N+O(1))^{d'}}E(\boldA_N^{D'})
		\end{align*}
		The second line follows from~\eqref{fourier coeff to gcd sum}, and by~\eqref{higher dimensional gcd sum} we get the last line.
	\end{proof}
	
	\begin{proof}[Proof of Theorem~\ref{theorem2}]
		It is sufficient to prove that for $M=N^{2/d-\delta}/2,$ for almost all $\boldalpha,$ $\widetilde{D}(N,M)(\boldalpha)\geq 1$ for s.l. $N,$ and for $M=\frac{C_{N}^{1/d}}{2(\log N)^{1+1/d+\delta}},$ for almost all $\boldalpha,$ $D(N,M)(\boldalpha)\geq 1$ for s.l. $N.$
		
		First we prove it for $\widetilde{D}(N,M)(\boldalpha).$
		Let $N_k=k^C$ for some large integer $C$ such that $C\delta>2.$ Then for any $M_k<N_k^{2/d-\delta}$ we have 
		\begin{align*}
			\sum_{k\geq 1}\int_{[0,1]^d}\left|\frac{\widetilde{D}(N_k,M_k)(\boldalpha)}{(N_k^2-N_k)/M_k^{d}}-1\right|^2d\boldalpha&=\sum_{k\geq 1}\frac{M_k^{2d}\Var(\widetilde{D}(N_k,M_k))}{(N_k^2-N_k)^2}\\
			&\ll\sum_{k\geq 1}\sum_{\substack{D'\subseteq D\\|D'|=d'}}\frac{M_k^{d'}(\log M_k)^{d'}}{N_k^4}\frac{N_k^{\epsilon}}{(\log N_k+O(1))^{d'}}E(\boldA_{N_k}^{D'})\\
			&\ll\sum_{k\geq 1}\sum_{d'\leq d/2}\frac{1}{N_k^{1-2d'/d+d'\delta-\epsilon}}+\sum_{k\geq 1}\sum_{d/2<d'\leq d}\frac{1}{N_k^{2-2d'/d+\delta/2}}<\infty.
		\end{align*}
		In the last line, we used the hypothesis of the theorem and the trivial bound $E(\boldA_{N_k}^{D'})\leq N^3.$
		So, we get for almost all $\boldalpha$, $\widetilde{D}(N_k,M_k)(\boldalpha)\sim\frac{N_k^{2}}{M_k}.$ 
		
		Let $N$ be large such that $N_k\leq N<N_{k+1}.$ Then 
		\begin{align*}
			\frac{\widetilde{D}(N_{k},M)}{N_{k}^2/M}\frac{N_{k}^2}{N^2}\leq\frac{\widetilde{D}(N,M)}{N^2/M}\leq \frac{\widetilde{D}(N_{k+1},M)}{N_{k+1}^2/M}\frac{N_{k+1}^2}{N^2}.
		\end{align*}
		Since $M<N^{2/d-\delta}\sim N_k^{2/d-\delta}<N_{k+1}^{2/d-\delta},$ $\frac{N_{k+1}}{N_k}\to 1,$ and $\widetilde{D}(N_k,M_k)/\frac{N_k^2}{M_k}\to 1$ almost surely, we obtain $\frac{\widetilde{D}(N,M)}{N^2/M}\to 1$ almost surely.
		Now take $M=N^{2/d-\delta}/2$ then, for almost all $\boldalpha,$ $\widetilde{D}(N,M)(\boldalpha)\geq 1.$ Therefore, for almost all $\boldalpha,$ the minimal gap $\delta_{\text{min}}(\boldalpha,N)\leq\frac{N^\delta}{N^{2/d}},$ for s.l. large $N.$

		Similarly, we prove that $D(N, M)\geq 1$ almost surely.
		Let $N_k=2^k.$ Then for any $M_k<\frac{C_{N_k}^{1/d}}{(\log N_k)^{1+1/d+\delta}}$ by Lemma~\ref{lmthm2} we have 
		\begin{align*}
			\sum_{k\geq 1}\int_{[0,1]^d}\left|\frac{D(N,M)(\boldalpha)}{C_{N_k}/M_k^d}-1\right|^2&=\sum_{k\geq 1}\Var(D(N_k,M_k))\frac{M_k^{2d}}{C_{N_k}^2}\\
			&\ll\sum_{k\geq 1}\frac{M_k}{C_{N_k}^{1-\epsilon}}+\frac{M_k^2(\log N)^{2+\epsilon}}{C_{N_k}}+\sum_{3\leq d'\leq d}\frac{M_k^{d'}(\log N_k)^{d'}}{C_{N_k}}\\
			&\ll\sum_{k\geq 1}\frac{M_k^{d}(\log N_k)^{d}}{C_{N_k}}\ll\sum_{k\geq 1}\frac{1}{(\log N_k)^{1+\delta}}<\infty.
		\end{align*}
		So, for almost all $\boldalpha,$ $(D(N_k,M_k))\sim\frac{C_{N_k}}{M_k^d}.$ Now let $N$ be large such that $N_k\leq N<N_{k+1}.$ Following the above argument we show that $D(N,M)\sim \frac{C_N}{M^d}$ almost surely. Choosing $M=\frac{C_{N}^{1/d}}{2(\log N)^{1+1/d+\delta}}$ gives us the required result.
	\end{proof}
	
	\section{Minimal gap for linear forms}
	In this section, we study the minimal gap of the linear form $({a_n^{(1)}\alpha_1+\cdots+a_n^{(d)}\alpha_d}),$ where $a_n^{(i)}$ are sequences of distinct natural numbers and prove Theorem~\ref{thm3}. 
	
	\begin{proof}[Proof of Theorem~\ref{thm3}]
		
		Let $\bolda\neq0\in\Z^d.$ We define a map $T:[0,1]^d\to[0,1]$ in the following way
		\begin{align*}
			\boldalpha\mapsto\bolda\cdot\boldalpha\,(\text{mod } 1).
		\end{align*}
		Then~\cite[Lemma 8]{sprindzuk} says that for any $A\subset[0,1)$ Borel measurable set we have
		\begin{align}\label{lemma 8 of sprindzuk}
			\lambda(T^{-1}(A))=\lambda(A).
		\end{align}
		
		\textbf{Lower Bound.}
		For $n\geq 1,$ we define
		$$\psi(n)=\frac{1}{n\log \sqrt{n}(\log_2 \sqrt{n})^{1+\epsilon}}$$ and
		$$S_n=\{\boldalpha\in[0,1]^d:|\boldz_n\cdot\boldalpha-q|<\psi(n) \text{ for some } q\in\Z\}.$$
		So~\eqref{lemma 8 of sprindzuk} implies $\lambda(S_n)\leq\min(2\psi(n),1),$ as $\boldz_n\neq0.$ Therefore we get
		$$\sum_{n\geq 1}\lambda(S_n)<\infty.$$
		Using the first Borel-Cantelli lemma we conclude that for almost all $\boldalpha\in[0,1)^d$, $\|\boldz_n\cdot\boldalpha\|\geq\psi(n)$ for s.l. n.
		This gives us $\delta_{\text{min}}(\boldalpha,N)\geq \frac{1}{\boldCN\log N(\log_2 N)^{1+\epsilon}}, \text{ for s.l. }N.$\\
		
		\textbf{First upper bound.}
		To get the first upper bound we use~\cite[Theorem 1]{remirez} which is the generalization of the Duffin-Schaeffer conjecture for linear forms.
		For $n\geq 1,$ we define
		\[S'_n=\{\boldalpha\in[0,1]^d:|\boldz_n\cdot\boldalpha-q|<\psi(n) \text{ for some } q\in\Z, (q,\boldz_n)=1\}.\] 
		If
		\begin{align}\label{div cond}
			\sum_{n\geq 1}\frac{\phi(\gcd(\boldz_n))\psi(n)}{\gcd(\boldz_n)}=\infty,
		\end{align}
		then by~\cite[Theorem 1]{remirez} we conclude that, for almost all $\boldalpha$
		\[\|\boldz_n\cdot\boldalpha\|\leq \psi(n)\text{ for i.m. }n.\] 
		Setting $\psi(n)=\frac{\log_2(\gcd(\boldz_n))}{n\log n\log_2 n}$ together with the fact that $\phi(q)/q\gg(\log_2 q)^{-1}$ we get the required sum~\eqref{div cond} diverges. Note that for $n\leq\boldCN,$ $\gcd(\boldz_n)\leq\sqrt{z_n^{(1)}\cdots z_n^{(d)}}\leq\sqrt{a_N^{(1)}\cdots a_N^{(d)}}.$ 
		Then, using the definition of $\boldz_n$ we obtain
		\[\delta_{\text{min}}(\boldalpha,N)\leq\frac{\log_2\sqrt{a_N^{(1)}\cdots a_N^{(d)}}}{\boldCN\log N\log_2 N}\: \text{  for i.m. } N.\]
		This proves the first upper bound. \\
		
		\textbf{Upper bound without sequence dependent term.}
		Now, to remove the sequence dependent factor $\frac{\log_2\sqrt{a_N^{(1)}\cdots a_N^{(d)}}}{\log N\log_2 N}$ from the above upper bound we follow~\cite{Aistleitner2021minimalgap}. 
		In this case, for $k\geq1$ and $2^{k/2}<n\leq 2^k,$ we set
		\[\psi(n)=\frac{1}{2n+1},\]
		\[S^*_n=\{\boldalpha\in[0,1]^d:|\boldz_n\cdot\boldalpha-q|<\psi(n) \text{ for some } q\in\Z,\:p|(q,\boldz_n)\implies p\geq 4^k\}\]
		and for $q\in\Z$
		\[S_n(q)=\{\boldalpha\in[0,1]^d:|\boldz_n\cdot\boldalpha-q|<\psi(n)\}.\]
		
		\begin{lemma}\cite[Lemma 6]{victor-samju2009}\label{lemma}
			Let $p|\gcd(\boldz_n)$ and $\psi(n)\in(0,1/2).$ Then  $\lambda(\cup_{q\in\Z}S_n(pq))=\frac{2\psi(n)}{p}.$
		\end{lemma}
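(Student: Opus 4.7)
My plan is to reduce the statement to a one-dimensional measure computation via the measure-preserving property~\eqref{lemma 8 of sprindzuk}. Since $p\mid\gcd(\boldz_n)$, I can write $\boldz_n=p\boldz'$ with $\boldz'\in\Z^d\setminus\{0\}$. Then, for any $q\in\Z$,
\[
\boldalpha\in S_n(pq)\iff |p\boldz'\cdot\boldalpha-pq|<\psi(n)\iff \bigl|\boldz'\cdot\boldalpha-q\bigr|<\frac{\psi(n)}{p}.
\]
Taking the union over $q\in\Z$, the set $\bigcup_{q\in\Z}S_n(pq)$ consists exactly of those $\boldalpha\in[0,1]^d$ with $\|\boldz'\cdot\boldalpha\|<\psi(n)/p$, where $\|\cdot\|$ denotes the distance to the nearest integer.

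Next I would define the map $T\colon[0,1]^d\to[0,1)$ by $T(\boldalpha)=\boldz'\cdot\boldalpha\pmod 1$, which is measure-preserving by~\eqref{lemma 8 of sprindzuk} because $\boldz'\neq 0$. Setting
\[
A=\bigl\{x\in[0,1)\colon \|x\|<\psi(n)/p\bigr\},
\]
we have $\bigcup_{q\in\Z}S_n(pq)=T^{-1}(A)$, and hence $\lambda\bigl(\bigcup_{q\in\Z}S_n(pq)\bigr)=\lambda(A)$.

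Finally, since $\psi(n)<1/2$ and $p\geq 1$, the threshold $\psi(n)/p$ is strictly less than $1/2$, so the nearest-integer neighbourhoods at $0$ and $1$ do not overlap and
\[
\lambda(A)=\lambda\bigl([0,\psi(n)/p)\cup(1-\psi(n)/p,1)\bigr)=\frac{2\psi(n)}{p},
\]
which yields the claim. The only point requiring care is verifying that $\boldz'\neq0$ (so that Sprindzuk's transfer lemma is applicable), which follows from $\boldz_n\neq 0$; the rest is routine.
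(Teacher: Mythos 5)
Your argument is correct. The reduction $\boldz_n=p\boldz'$ turns membership in $S_n(pq)$ into $|\boldz'\cdot\boldalpha-q|<\psi(n)/p$, the union over $q$ is exactly $T^{-1}(A)$ for $T(\boldalpha)=\boldz'\cdot\boldalpha \pmod 1$ and $A=\{x:\|x\|<\psi(n)/p\}$, Sprindzuk's transfer identity~\eqref{lemma 8 of sprindzuk} applies since $\boldz'\neq 0$, and the hypothesis $\psi(n)<1/2$ (together with $p\geq 1$) guarantees the two arcs of $A$ are disjoint, giving $\lambda(A)=2\psi(n)/p$. Note, however, that the paper does not prove this lemma at all: it is imported verbatim as Lemma~6 of Beresnevich--Velani, so there is no in-paper argument to compare against. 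Your proof is essentially the standard one behind that citation, and it fits the framework of this paper particularly well because it reuses the same measure-preserving map $T$ that the author already invokes for the lower bound and for Lemma~\ref{lemma3}; in that sense your write-up makes the linear-forms reduction explicit where the paper leaves it to the reference. The only cosmetic caveat is that the factorization $S_n(pq)=T^{-1}(\cdot)$ should be checked on $[0,1]^d$ rather than on all of $\R^d$, which you implicitly do by restricting to $\boldalpha\in[0,1]^d$ from the start; no gap results.
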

		\begin{lemma}\cite[Lemma 8]{victor-samju2009}\label{lemma2}
			Let $\psi(n)\in(0,1/2).$ Then  $\lambda(S^{'}_n)=2\psi(n)\frac{\phi(\boldz_n)}{\gcd(\boldz_n)}.$
		\end{lemma}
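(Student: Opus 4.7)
The plan is to derive Lemma~\ref{lemma2} from Lemma~\ref{lemma} via a standard Möbius inversion on the coprimality constraint. Setting $g := \gcd(\boldz_n)$, the condition $(q,\boldz_n)=1$ defining $S'_n$ reads $\gcd(q,g)=1$, and correspondingly $\phi(\boldz_n)/\gcd(\boldz_n)=\phi(g)/g$. The goal is to expand the indicator of this coprimality by Möbius, swap summation orders, and then recognise each inner set as one already handled by Lemma~\ref{lemma}.

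The first step is the observation that, since $\psi(n)<1/2$, at most one integer $q$ can satisfy $|\boldz_n\cdot\boldalpha-q|<\psi(n)$ for any given $\boldalpha$, so the sets $\{S_n(q)\}_{q\in\Z}$ are pairwise disjoint. Consequently
$$\mathbf{1}_{S'_n}(\boldalpha) \;=\; \sum_{\substack{q\in\Z\\(q,g)=1}} \mathbf{1}_{S_n(q)}(\boldalpha)$$
holds pointwise. Substituting the Möbius identity $\mathbf{1}_{(q,g)=1}=\sum_{d\mid\gcd(q,g)}\mu(d)$, exchanging the order of summation, and writing the inner $q$-sum as running over multiples $q=dq'$, I obtain
$$\lambda(S'_n) \;=\; \sum_{d\mid g}\mu(d)\,\lambda\!\Big(\bigcup_{q'\in\Z} S_n(dq')\Big).$$

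Applying Lemma~\ref{lemma} to each term on the right (which requires exactly $d\mid g$) gives $\lambda(\bigcup_{q'}S_n(dq'))=2\psi(n)/d$, whence
$$\lambda(S'_n) \;=\; 2\psi(n)\sum_{d\mid g}\frac{\mu(d)}{d} \;=\; 2\psi(n)\,\frac{\phi(g)}{g}$$
by the classical identity $\sum_{d\mid g}\mu(d)/d=\phi(g)/g$. The only point requiring attention is the pointwise disjointness of the $S_n(q)$ that legitimises the interchange of summation and integration; this is a direct consequence of the hypothesis $\psi(n)<1/2$, and otherwise the argument is routine with no serious obstacle to anticipate.
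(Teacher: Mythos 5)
The paper does not prove this lemma at all: it is imported verbatim as \cite[Lemma 8]{victor-samju2009}, so there is no internal argument to compare against. Your derivation is a correct and essentially self-contained reduction to Lemma~\ref{lemma} by M\"obius inversion, and it is the standard way this identity is obtained. The disjointness of the sets $S_n(q)$ for $\psi(n)<1/2$ does legitimise writing $\lambda(S'_n)$ as the sum of the $\lambda(S_n(q))$ over $q$ coprime to $g$, and the interchange of the $q$- and $d$-sums is harmless since only finitely many $q$ give a nonempty $S_n(q)$. The one point you should make explicit is that you invoke Lemma~\ref{lemma} for an arbitrary (squarefree, but possibly composite) divisor $d\mid g$, whereas the lemma as stated in the paper uses the letter $p$ and is applied there only to primes. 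This is not a real obstruction: writing $\boldz_n=g\bolda$ with $\gcd(\bolda)=1$, the set $\bigcup_{q'}S_n(dq')$ is the preimage under $\boldalpha\mapsto \frac{g}{d}\bolda\cdot\boldalpha \pmod{1}$ of an interval of length $2\psi(n)/d$, so the measure-preservation identity \eqref{lemma 8 of sprindzuk} gives $\lambda\bigl(\bigcup_{q'}S_n(dq')\bigr)=2\psi(n)/d$ for every $d\mid g$, which is exactly what your computation needs. With that one sentence added, the argument is complete.
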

		It is clear that 
		\begin{align*}
			S^*_n=\bigcup_{\substack{q\in\Z\\p|(q,\boldz_n)\Rightarrow p>4^k}}S_n(q).
		\end{align*}
		Then using Lemma~\ref{lemma} and~Lemma~\ref{lemma2} we obtain
		\begin{align*}
			\lambda(S^*_n)&=\lambda(\bigcup_{\substack{q\in\Z\\(q,\boldz_n)=1}}S_n(q))+\lambda(\bigcup_{\substack{q\in\Z\\p|\gcd(\boldz_n)\Rightarrow p>4^k}}S_n(pq))\\
			&=\lambda(S^{'}_n)+\sum_{\substack{p|\gcd(\boldz_n)\\p>4^k}}\frac{2\psi(n)}{p}\\
			&=\frac{2\psi(n)}{\gcd(\boldz_n)}\#\{0\leq a\leq \gcd(\boldz_n): p|(a,\boldz_n)\Rightarrow p>4^k\}.
		\end{align*}
		Then by following the proof of~\cite[Lemma 2]{Aistleitner2021minimalgap} we get
		\begin{lemma}\label{lower bound of S*_n}
			For all $n\in(2^{k/2},2^k]$ 
			\[\lambda(S^{*}_n)\geq\frac{1}{n}\frac{1}{e^\gamma\log(4^k)}(1+o(1))\]
			as $k\to \infty,$ where $\gamma$ denotes the Euler-Mascheroni constant.
		\end{lemma}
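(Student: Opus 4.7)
The idea is to evaluate the cardinality appearing in the identity just above the lemma via the Chinese remainder theorem and then combine with Mertens' third theorem. Writing $g=\gcd(\boldz_n)$ and letting $P=\prod_{p\le 4^k,\,p\mid g}p$ denote the squarefree product of the ``small'' prime divisors of $g$, I first observe that the condition $p\mid(a,\boldz_n)\Rightarrow p>4^k$ is equivalent to $\gcd(a,P)=1$. Since $P\mid g$, write $g=Pm$; then for $a=qP+r$ with $r\in[0,P)$ one has $\gcd(a,P)=\gcd(r,P)$, so the count breaks into $m$ blocks each contributing $\phi(P)$, giving
\[
\#\{0\le a<g : p\mid (a,\boldz_n)\Rightarrow p>4^k\} \;=\; \frac{g\,\phi(P)}{P}.
\]
Substituting this into the displayed expression for $\lambda(S_n^*)$ just above the lemma causes the factor $g$ to cancel, leaving the clean identity $\lambda(S_n^*)=2\psi(n)\,\phi(P)/P$.

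Next I would estimate the two factors in the range $n\in(2^{k/2},2^k]$ as $k\to\infty$. From $\psi(n)=1/(2n+1)$ one gets $2\psi(n)=\tfrac{1}{n}(1+O(1/n))$, and since $n>2^{k/2}\to\infty$ this error is $o(1)$ uniformly. For the Euler factor, the primes dividing $P$ form a subset of $\{p:p\le 4^k\}$, so
\[
\frac{\phi(P)}{P} \;=\; \prod_{p\mid P}\!\left(1-\frac{1}{p}\right) \;\ge\; \prod_{p\le 4^k}\!\left(1-\frac{1}{p}\right) \;=\; \frac{e^{-\gamma}}{\log(4^k)}\bigl(1+o(1)\bigr),
\]
by Mertens' third theorem. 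Multiplying these two estimates immediately yields the target bound $\lambda(S_n^*)\ge\frac{1}{n}\cdot\frac{1}{e^{\gamma}\log(4^k)}(1+o(1))$.

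I do not anticipate any serious obstacle. The one point that deserves attention is that the Mertens lower bound must hold uniformly over the choice of $\boldz_n$: this is immediate because omitting primes from $P$ only makes $\phi(P)/P$ larger, so the worst case (smallest $\phi(P)/P$) is exactly when $P$ contains every prime up to $4^k$. Both $o(1)$ error terms depend only on $k$ (or are forced to vanish through $n>2^{k/2}$), so uniformity over $n\in(2^{k/2},2^k]$ is automatic.
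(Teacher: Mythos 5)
Your argument is correct and is essentially the same as the paper's, which simply defers to the proof of Lemma 2 in Aistleitner--El-Baz--Munsch: the identity $\lambda(S_n^*)=2\psi(n)\,\phi(P)/P$ via the Chinese remainder count, followed by Mertens' third theorem applied to the worst case $P=\prod_{p\le 4^k}p$. You have in effect written out the details that the paper leaves to the citation.
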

		\begin{lemma}\label{lemma3}
			Let $m\neq n\in(2^{k/2},2^k],$ $d_n=\gcd(\boldz_n)$ and $d_m=\gcd(\boldz_m).$ \\
			If $\boldz_n\nparallel \boldz_m,$ then
			$$\lambda(S^{*}_n\cap S^{*}_m)=\lambda(S^{*}_n)\lambda(S^{*}_m).$$
			If $\boldz_n\parallel\boldz_m,$ then
			$$\lambda(S^{*}_n\cap S^{*}_m)\ll\frac{\sqrt{\psi(n)\psi(m)}}{4^k}+P(d_n,d_m)\lambda(S^{*}_n)\lambda(S^{*}_m)$$
			where $P(d_n,d_m)$ is defined as in~\cite[Lemma 3]{Aistleitner2021minimalgap}.
		\end{lemma}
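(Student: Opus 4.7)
The plan is to exploit the fact that $S_n^*$ depends on $\boldalpha$ only through the projection $T_n(\boldalpha):=\boldz_n\cdot\boldalpha\bmod 1$. Writing $S_n^* = T_n^{-1}(A_n)$ for a suitable $A_n\subset[0,1)$, we have $\lambda(A_n)=\lambda(S_n^*)$ by~\eqref{lemma 8 of sprindzuk}, and similarly $S_m^* = T_m^{-1}(A_m)$ with $\lambda(A_m)=\lambda(S_m^*)$. The two cases then correspond to two very different structures of the joint map $T(\boldalpha):=(T_n(\boldalpha),T_m(\boldalpha))\colon[0,1]^d\to[0,1)^2$.

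For the non-parallel case, I would identify the pushforward $T_*\lambda$ by its Fourier coefficients:
\[
\int_{[0,1]^d} e\bigl((j\boldz_n + k\boldz_m)\cdot\boldalpha\bigr)\, d\boldalpha \;=\; \mathbf{1}_{\{j\boldz_n + k\boldz_m = \boldsymbol{0}\}} \;=\; \mathbf{1}_{\{(j,k)=(0,0)\}},
\]
where the last equality uses that $\boldz_n,\boldz_m$ are nonzero integer vectors and $\boldz_n\nparallel\boldz_m$, so any $\Z$-linear relation between them is trivial. Matching characters forces $T_*\lambda=\lambda\otimes\lambda$ on the $2$-torus, and hence
\[
\lambda(S_n^* \cap S_m^*) \;=\; \lambda\bigl(T^{-1}(A_n\times A_m)\bigr) \;=\; \lambda(A_n)\,\lambda(A_m) \;=\; \lambda(S_n^*)\,\lambda(S_m^*).
\]

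For the parallel case, since $\boldz_n,\boldz_m$ have positive integer entries and are parallel, they share a common primitive direction: write $\boldz_n=d_n\boldu$ and $\boldz_m=d_m\boldu$ with $\gcd(\boldu)=1$. Then $\gcd(q,\boldz_n)=\gcd(q,d_n\boldu)=\gcd(q,d_n)$, and analogously for $\boldz_m$. Setting $\beta:=\boldu\cdot\boldalpha\bmod 1$ (uniform on $[0,1]$ by~\eqref{lemma 8 of sprindzuk}), the events defining $S_n^*$ and $S_m^*$ reduce to the one-dimensional conditions $|d_n\beta-q|<\psi(n)$ and $|d_m\beta-q'|<\psi(m)$ with the prime-divisor restrictions placed on $\gcd(q,d_n)$ and $\gcd(q',d_m)$, i.e.\ precisely the setup of Lemma~3 in~\cite{Aistleitner2021minimalgap}. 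Applying that lemma to the pair $(d_n,d_m)$ produces the two contributions $\sqrt{\psi(n)\psi(m)}/4^k$ and $P(d_n,d_m)\lambda(S_n^*)\lambda(S_m^*)$ claimed in the statement.

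The only real obstacle is the bookkeeping in the parallel case: one must verify that the identification $\gcd(q,\boldz_n)=\gcd(q,d_n)$ preserves the admissibility condition ``$p\mid(q,\boldz_n)\Rightarrow p>4^k$'' so that Lemma~3 of~\cite{Aistleitner2021minimalgap} can be applied off the shelf with $(d_n,d_m)$ in place of their pair of integers, and that our choice $\psi(n)=1/(2n+1)$ fits their hypothesis on the approximating function. Once those checks are in place, the estimate is immediate from the one-dimensional result.
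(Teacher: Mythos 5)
Your parallel case is essentially the paper's own argument: factor $\boldz_n=d_n\boldu$, $\boldz_m=d_m\boldu$ through the common primitive direction, observe $\gcd(q,\boldz_n)=\gcd(q,d_n)$, use the measure preservation~\eqref{lemma 8 of sprindzuk} for $\boldalpha\mapsto\boldu\cdot\boldalpha\bmod 1$ to pass to the one-dimensional sets $U_n$, and apply \cite[Lemma 3]{Aistleitner2021minimalgap} to the pair $(d_n,d_m)$; the paper does exactly this, via the inclusion $S^*_n\cap S^*_m\subseteq T^{-1}(U_n\cap U_m)$ and the identity $\lambda(S^*_n)=\lambda(U_n)$. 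For the non-parallel case the paper simply cites \cite[Lemma 1]{remirez} for the independence of $S^*_n$ and $S^*_m$, whereas you attempt a direct Fourier proof. That is a legitimate, more self-contained route, but as written it has a gap.

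The gap: $S^*_n$ is \emph{not} of the form $T_n^{-1}(A_n)$ for the mod-$1$ map $T_n(\boldalpha)=\boldz_n\cdot\boldalpha\bmod 1$ once $d_n=\gcd(\boldz_n)>1$. The union of intervals $(q-\psi(n),q+\psi(n))$ runs only over $q$ with $p\mid(q,\boldz_n)\Rightarrow p>4^k$, and admissibility of $q$ is invariant under $q\mapsto q+d_n$ but not under $q\mapsto q+1$: for $\boldz_n=(2,2)$ and $4^k>2$ only odd $q$ are admissible, so $\boldalpha=(1/2,0)$ lies in $S^*_n$ while $\boldalpha=(1/2,1/2)$ does not, although both have $\boldz_n\cdot\boldalpha\equiv 0\bmod 1$. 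Hence the identity $\lambda(S^*_n\cap S^*_m)=\lambda\bigl(T^{-1}(A_n\times A_m)\bigr)$ is not available as stated. The repair is routine: the admissible set \emph{is} $d_n$-periodic, so work on the torus $(\R/d_n\Z)\times(\R/d_m\Z)$, equivalently with the primitive vectors $\boldz_n/d_n$ and $\boldz_m/d_m$; the same character computation applies, since for $(j,k)\neq(0,0)$ the frequency $j\boldz_n/d_n+k\boldz_m/d_m$ is a nonzero integer vector precisely because $\boldz_n\nparallel\boldz_m$, and independence follows. With that correction (or by simply quoting \cite[Lemma 1]{remirez} as the paper does), your argument is complete.
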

		\begin{proof}
			If $\boldz_n\nparallel \boldz_m$ then by~\cite[Lemma 1]{remirez} we get $S^{*}_n$ and $S^{*}_m$ are independent. Therefore, $\lambda(S^{*}_n\cap S^{*}_m)=\lambda(S^{*}_n)\lambda(S^{*}_m).$

			If they are parallel then $\exists\: \bolda\in\Z^d\setminus 0$ such that $\boldz_n=d_n\bolda,$ $\boldz_m=d_m\bolda$ and $\gcd(\bolda)=1.$ We define
			\[U_n=\{\alpha\in[0,1): |d_n\alpha-q|<\psi(n), p|(q,d_n)\Rightarrow p>4^k\}.\] 
			Then $S^{*}_n\cap S^{*}_m\subseteq T^{-1}(U_n\cap U_m),$ which implies $\lambda(S^{*}_n\cap S^{*}_m)\leq\mu(U_n\cap U_m)$ (see~\eqref{lemma 8 of sprindzuk}). Using~\cite[Lemma 3]{Aistleitner2021minimalgap} we get
			\[\mu(U_n\cap U_m)\ll\frac{\sqrt{\psi(n)\psi(m)}}{4^k}+P(d_n,d_m)\lambda(U_n)\lambda(U_m).\]
			Note that $\lambda(S^{*}_n)=\mu(U_n),$ here we used~\eqref{lemma 8 of sprindzuk}. Hence, the parallel case is proved.
		\end{proof}
		\begin{lemma}(zero-one law)\cite[Theorem 1]{victor-samju2008}\label{zero one law}
			$\lambda(\limsup_{n\to\infty}S_n)\in\{0,1\}.$
		\end{lemma}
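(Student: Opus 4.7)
The plan is to establish $\lambda(\limsup_{n\to\infty} S_n)\in\{0,1\}$ by exhibiting a measure-preserving ergodic action on $\mathbb{T}^d=[0,1)^d$ under which the tail set is invariant modulo null sets, and then appealing to ergodicity.

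The natural candidate is the action of the dense subgroup $\mathbb{Q}^d/\mathbb{Z}^d$ by translation: $T_{\boldsymbol{r}}(\boldalpha) = \boldalpha+\boldsymbol{r}\pmod{1}$. Each $T_{\boldsymbol{r}}$ is Lebesgue measure preserving, and the full action of $\mathbb{Q}^d/\mathbb{Z}^d$ is ergodic, as a Fourier calculation shows that any $L^2$ function on $\mathbb{T}^d$ invariant under all rational translations must be constant a.e. Consequently, any measurable set invariant modulo null sets under all $T_{\boldsymbol{r}}$ has Lebesgue measure in $\{0,1\}$.

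The key step is verifying invariance of $\limsup S_n$ under each $T_{\boldsymbol{r}}$. For $\boldsymbol{r}=\boldsymbol{p}/q$ with $\boldsymbol{p}\in\Z^d$, a direct computation using the shape of $S_n$ from the proof of Theorem~\ref{thm3} yields
\[
T_{\boldsymbol{r}}(S_n) = \bigl\{\boldsymbol{\beta}\in[0,1)^d : |\boldzn\cdot\boldsymbol{\beta} - q' - \boldzn\cdot\boldsymbol{r}| < \psi(n)\text{ for some }q'\in\Z\bigr\},
\]
which equals $S_n$ setwise precisely when $\boldzn\cdot\boldsymbol{p}\equiv 0\pmod{q}$. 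By pigeonhole on residue classes modulo $q$, a rich enough sequence $(\boldzn)$ (and here $\boldCN\to\infty$, so the residues of $\boldzn\cdot\boldsymbol{p}$ modulo $q$ must repeat) produces an infinite subsequence of indices $(n_j)$ along which the congruence holds, and hence along which $T_{\boldsymbol{r}}$ literally preserves $S_{n_j}$. In particular $\limsup_{j\to\infty} S_{n_j}$ is $T_{\boldsymbol{r}}$-invariant on the nose.

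The main obstacle is bridging from "preservation along a subsequence" to "preservation of $\limsup_{n\to\infty} S_n$ modulo null sets" for every rational $\boldsymbol{r}$ simultaneously. The plan is to show that the contribution of the "bad" indices (those with $\boldzn\cdot\boldsymbol{p}\not\equiv 0\pmod q$) to $\limsup S_n$ is null, using the measure bound $\lambda(S_n)\leq 2\psi(n)$ recorded in the proof of Theorem~\ref{thm3} together with an equidistribution argument for $\boldzn\cdot\boldsymbol{p}\pmod{q}$ to show the good and bad subsequences contribute comparable measures. If this direct equidistribution route proves stubborn, the fallback is a Cassels-type argument invoking the ergodicity of a dilation map $\boldalpha\mapsto k\boldalpha\pmod{1}$ in place of translations, since dilations interact cleanly with the linear form $\boldzn\cdot\boldalpha$ and avoid the arithmetic congruence at the cost of rescaling $\psi$. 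Either way, once invariance of $\limsup S_n$ modulo nullsets is secured for a dense subgroup, ergodicity closes the proof.
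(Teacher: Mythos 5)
The paper does not actually prove this lemma: it is quoted verbatim from Beresnevich--Velani \cite{victor-samju2008}, whose zero-one law for systems of linear forms is a substantial standalone result, not a soft consequence of ergodicity. Your attempt to reprove it from scratch has two concrete gaps. First, the pigeonhole step only yields some residue class $c$ modulo $q$ that the values $\boldz_n\cdot\boldsymbol{p}$ hit infinitely often; nothing forces $c=0$, so in general you obtain no infinite subsequence along which $T_{\boldsymbol{r}}$ literally preserves $S_{n_j}$. Second, and more seriously, even granting such a subsequence, $\limsup_j S_{n_j}$ is merely a subset of $\limsup_n S_n$, and the plan to dismiss the ``bad'' indices as contributing a null set cannot work: in the application one has $\psi(n)=1/(2n+1)$, so $\sum_n \lambda(S_n)$ diverges and the bad indices can perfectly well carry a limsup of positive or even full measure. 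The bound $\lambda(S_n)\le 2\psi(n)$ combined with equidistribution of $\boldz_n\cdot\boldsymbol{p}$ modulo $q$ would at best show that the good and bad families have comparable total measure, which says nothing about the measure of either limsup. The dilation fallback fares no better: under $\boldalpha\mapsto k\boldalpha$ the set $S_n$ pulls back to a set defined by the vector $k\boldz_n$, which need not occur anywhere in the sequence $(\boldz_m)$, so the limsup set is not invariant under dilations either.

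The underlying difficulty is that $\limsup_n S_n$ is not invariant, even modulo null sets a priori, under any naturally acting ergodic group of transformations of $[0,1)^d$; this is exactly why zero-one laws of Cassels--Gallagher--Beresnevich--Velani type require harder inputs (quasi-independence on average, Lebesgue density arguments, or a transference between $\{\|\boldz_n\cdot\boldalpha\|<\psi(n)\}$ and its dilate $\{\|\boldz_n\cdot\boldalpha\|<C\psi(n)\}$). If you want a self-contained argument you must follow the actual proof in \cite{victor-samju2008}; otherwise, do as the paper does and cite it.
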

		
		By the Chung-Erd\H{o}s inequality we have
		\begin{align}\label{Chung-Erdos eqn}
			\lambda\left(\bigcup_{2^{k/2}<n\leq 2^k}S^{*}_n\right)\geq\frac{(\sum_{2^{k/2}<n\leq 2^k}\lambda(S^*_n))^2}{\sum_{2^{k/2}<n\leq 2^k}\lambda(S^*_n\cap S^*_m)}.
		\end{align}
		From Lemma~\ref{lower bound of S*_n} we get
		\begin{align}\label{numerator lower bound}
			\sum_{2^{k/2}<n\leq 2^k}\lambda(S^*_n)\geq\sum_{2^{k/2}<n\leq 2^k}\frac{1}{n}\frac{1}{e^\gamma\log 4^k}(1+o(1))\geq 0.14 
		\end{align}
		for s.l. $k.$ Also,~\cite[see equation (30)]{Aistleitner2021minimalgap}
		\begin{align}\label{upper bound of S*_n}
			\sum_{2^{k/2}<n\leq 2^k}\lambda(S^*_n)\leq 0.99.
		\end{align}
		Now we show that the denominator of the RHS of~\eqref{Chung-Erdos eqn} is $\ll 1.$ For that we divide the sum into three parts.
		\begin{align*}
			\sum_{2^{k/2}<n,m\leq 2^k}\lambda(S^*_n\cap S^*_m)=\sum_{2^{k/2}<n=m\leq 2^k}\lambda(S^*_n)+\sum_{\substack{2^{k/2}<n\neq m\leq 2^k\\ \boldz_n\parallel \boldz_m}}\lambda(S^*_n\cap S^*_m)+\sum_{\substack{2^{k/2}<n\neq m\leq 2^k\\ \boldz_n\nparallel \boldz_m}}\lambda(S^*_n\cap S^*_m)
		\end{align*}
		By Lemma~\ref{lemma3} for the third term we have
		\begin{align*}
			\sum_{\substack{2^{k/2}<n\neq m\leq 2^k\\ \boldz_n\nparallel \boldz_m}}\lambda(S^*_n)\lambda(S^*_m)\ll(\sum_{2^{k/2}<n\leq 2^k}\lambda(S^*_n))^2.
		\end{align*}
		Therefore, by~\eqref{upper bound of S*_n} the first and third terms are $\ll 1.$ 
		Again, following Lemma~\ref{lemma3} we get that the second term is bounded above by
		\begin{align*}
			\sum_{2^{k/2}<n,m\leq 2^k}\frac{\sqrt{\psi(n)\psi(m)}}{4^k}+\sum_{2^{k/2}<n,m\leq 2^k}P(d_n,d_m)\lambda(S^{*}_n)\lambda(S^{*}_m)
		\end{align*}
		It is easy to see that $\sum_{2^{k/2}<n,m\leq 2^k}\frac{\sqrt{\psi(n)\psi(m)}}{4^k}\ll 1.$ For the second sum, using arguments given in~\cite[see pages 3869-3871]{Aistleitner2021minimalgap} we obtain
		\[\sum_{2^{k/2}<n,m\leq 2^k}P(d_n,d_m)\lambda(S^{*}_n)\lambda(S^{*}_m)\ll 1.\]
		Therefore, gluing it all together we have
		\begin{align}\label{dinominator}
			\sum_{2^{k/2}<n,m\leq 2^k}\lambda(S^*_n\cap S^*_m)\ll 1, \text{ for s.l. }k.
		\end{align}
		Now, together with ~\eqref{Chung-Erdos eqn},~\eqref{numerator lower bound} and~\eqref{dinominator} we obtain
		\begin{align*} 
			\lambda\left(\bigcup_{2^{k/2}<n\leq 2^k}S^{*}_n\right)\gg 1, \text{ for s.l. }k.
		\end{align*}
		This implies that for large $k,$ $\lambda(\limsup_{n\to \infty} S^*_n)>0.$ Since $S^*_n\subset S_n,$ we get $\lambda(\limsup_{n\to \infty} S_n)>0.$ Then by Lemma~\ref{zero one law} we  have
		\[\lambda(\limsup_{n\to \infty} S_n)=1.\]
		Thus, for almost all $\boldalpha$ there are infinitely many $n$ such that
		\[\|\boldz_n\cdot\boldalpha\|\leq\frac{1}{2n+1}.\]
		Therefore,
		\[\delta_{\text{min}}(\boldalpha,N)=\min_{1\leq n\leq\boldCN}\|\boldz_n\cdot\boldalpha\|\leq\frac{1}{2\boldC_{N-1}+1}\leq\frac{1}{\boldCN}\text{ for i.m. } N.\]
		Here we used the fact that $\boldCN\leq \boldC_{N-1}+N.$
	\end{proof}
	
	\begin{proof}[Proof of Theorem~\ref{thm3.1}]
		Let $D(N,M)(\boldalpha)=\sum_{1\leq n\neq m\leq N}\chi_{\|(\bolda_n-\bolda_m)\cdot \boldalpha\|\leq\frac{1}{2M}}.$ 
		Our aim is to show that for almost all $\boldalpha,$ $D(N,M)(\boldalpha)\geq 1.$
		\begin{align*}
			&\int_{[0,1]^d}\left(D(N,M)-\frac{(N^2-N)}{M}\right)^2d\boldalpha\\
			&=\sum_{\substack{1\leq n\neq m\leq N\\1\leq k\neq l\leq N}}\sum_{j,t\in\Z\setminus 0}c_tc_j\int_{[0,1]^d}e(j(\bolda_n-\bolda_m)\cdot\boldalpha-t(\bolda_k-\bolda_l)\cdot\boldalpha)d\boldalpha.\\
			&\ll\sum_{\substack{1\leq n\neq m\leq N\\1\leq k\neq l\leq N}}\sum_{\substack{j,t\in\Z\setminus 0\\j(\bolda_n-\bolda_m)=t(\bolda_k-\bolda_l)}}|c_tc_j|\\
			&\ll \min_{i\leq d}\sum_{v,w\in\Z\setminus 0}\mathcal{R}_N^{\{i\}}(v)\mathcal{R}_N^{\{i\}}(w)\sum_{\substack{j,t\in\Z\setminus 0\\jv=tw}}|c_tc_j|\\
			&\ll \frac{\log M }{M}\frac{N^{\epsilon}\min_{i\leq d}E(A_N^{\{i\}})}{(\log N+O(1))},\\
		\end{align*}
		for any $\epsilon>0$. We used~\eqref{higher dimensional gcd sum} in the last line.
		Under the hypothesis similar arguments as in Theorem~\ref{theorem2} show that for almost all $\boldalpha,$ $\delta_{\text{min}}(\boldalpha,N)\leq\frac{N^\delta}{N^2},$ for s.l. large $N.$ 
	\end{proof}

	\section{Minimal gap for van der Corput sequence}\label{section van der Corput}
	Let $b\geq 2$ be an integer. For any $n\in\N$ we can write it in base $b$ as
	\[n=\sum_{i\geq 0}a_ib^i,\]
	where $0\leq a_i<b.$ The van der Corput sequence $(g_b(n))$ is defined by
	\[g_b(n)=\sum_{i\geq 0}\frac{a_i}{b^{i+1}}.\]
	Let $b_1,\dots,b_d\geq 2$ be pairwise coprime natural numbers. The Halton sequence $g_{\boldb}(n)$ is the $d$-dimensional sequence $(g_{b_1}(n),\dots,g_{b_d}(n)).$
	\begin{proof}[Proof of Theorem~\ref{thm4}]
		Let $k\geq1$ and $N=b^k-1.$ Suppose $1\leq m\neq n\leq N$ and
		\begin{align*}
			&n=\sum_{i=0}^{k-1}a_ib^i,\:m=\sum_{i=0}^{k-1}a'_ib^i.
		\end{align*}
		Then
		\begin{align*}
			&b^k(g_b(n)-g_b(m))=\sum_{i=0}^{k-1}(a_i-a'_i)b^{k-(i+1)}=A-C\\
			\implies&\|g_b(n)-g_b(m)\|=\left\|\frac{A-C}{b^k}\right\|=\min(|A-C|/b^k,1-|A-C|/b^k).
		\end{align*}
		Since $|A-C|\geq 1$ and $|A-C|\leq b^k-1$ we have
		\[\|g_b(n)-g_b(m)\|\geq\frac{1}{b^k},\]
		which implies that 
		\begin{align}\label{subseq lb}
			\delta_{\text{min}}((g_b(n)),b^k-1)\geq \frac{1}{b^k}.
		\end{align}
		Now let $N=b^k,$ $n=b^{k-1}$ and $m=2b^{k-1}.$ Then 
		\begin{align*}
			&\|g_b(n)-g_b(m)\|=\frac{1}{b^k} \text{ for }b\geq 3,
			\text{ and for } b=2\,\|g_2(n)-g_2(m)\|=\frac{1}{2^{k+1}},
		\end{align*}
		which imply that 
		\begin{align}\label{subseq ub}
			\delta_{\text{min}}((g_b(n)),b^k)\leq \frac{1}{b^k}\text{ for }b\geq 3,\text{ and }
			\delta_{\text{min}}((g_2(n)),2^k)\leq \frac{1}{2^{k+1}}.
		\end{align}
		Now we want to bound the minimal gaps for general $N.$
		Let $N\geq 1$ such that $b^{K-1}\leq N<b^K$ for some $K\geq 1.$ Then 
		\begin{align}\label{minimal gap inequality}
			\delta_{\text{min}}((g_b(n)),b^K-1)\leq\delta_{\text{min}}((g_b(n)),N)\leq\delta_{\text{min}}((g_b(n)),b^{K-1})
		\end{align}
		So, \eqref{subseq lb}, \eqref{subseq ub} and \eqref{minimal gap inequality} together give us
		\begin{align*}
			\frac{1}{bN}\leq\frac{1}{b^K}\leq\delta_{\text{min}}((g_b(n)),N)\leq\frac{1}{b^{K-1}}\leq\frac{b}{N}\text{ for $b\geq 3,$ and }\delta_{\text{min}}((g_2(n)),N)=\frac{1}{2^{K}}.
		\end{align*}
		This proves the theorem.
	\end{proof}
	\section{Acknowledgement}
	The author would like to thank his thesis supervisor Prof. Anirban Mukhopadhyay for fruitful discussions and many suggestions on an earlier version of this note.  Also, he would like to thank Prof. Christoph Aistleitner for several suggestions.

	\bibliographystyle{abbrv}
\end{document}